\documentclass[12pt]{amsart}
\usepackage{amsmath,amssymb}
\usepackage{amsfonts}
\usepackage{amsthm}
\usepackage{latexsym}
\usepackage{graphicx}
\usepackage{caption}
\captionsetup{font={scriptsize}}

\def\tr{{\rm tr}}

\def\R{\mathbb{R}}
\def\C{\mathbb{C}}

\def\vv<#1>{\langle#1\rangle}

\def\XXint#1#2{\setbox0=\hbox{$#1{#2}{\int}$}{#2}\kern-.5\wd0 }

\def\XXint#1#2#3{{\setbox0=\hbox{$#1{#2#3}{\int}$}
     \vcenter{\hbox{$#2#3$}}\kern-.5\wd0}}



\def\vv<#1>{\left\langle#1\right\rangle}
\def\e{\epsilon}
\def\Aut{{\rm Aut}}
\def\Diff{{\rm Diff}}
\def\Aff{{\rm Aff}}
\newtheorem{thm}{Theorem}[section]

\newtheorem{lem}{Lemma}[section]

\theoremstyle{definition}
\newtheorem{defn}{Definition}[section]
\theoremstyle{remark}

\newtheorem{rem}{Remark}[section]

\numberwithin{equation}{section}

\begin{document}

\title{Affine Self-similar solutions of the affine curve shortening flow I: The degenerate case}
\author{Chengjie Yu$^1$}
\address{Department of Mathematics, Shantou University, Shantou, Guangdong, 515063, China}
\email{cjyu@stu.edu.cn}
\author{Feifei Zhao}
\address{Department of Mathematics, Shantou University, Shantou, Guangdong, 515063, China}
\email{14ffzhao@stu.edu.cn}

\thanks{$^1$Research partially supported by the Yangfan project from Guangdong Province and NSFC 11571215.}

\renewcommand{\subjclassname}{%
  \textup{2010} Mathematics Subject Classification}
\subjclass[2010]{Primary 53C44; Secondary 35K05}
\date{}
\keywords{self-similar solution, affine transformation, curve shortening flow}
\begin{abstract}
In this paper, we consider affine self-similar solutions for the affine curve shortening flow in the Euclidean plane.  We obtain the equations of all affine self-similar solutions up to affine transformations and solve the equations or give descriptions of the solutions for the degenerate case. Some new special solutions for the affine curve shortening flow are found.
\end{abstract}
\maketitle\markboth{Yu \& Zhao}{Affine self-similar solutions}
\section{introduction}
A generalized curve shortening flow (GCSF) in $\R^2$ is a family of curve $\hat X(t,u)$ satisfying the following evolution equation:
\begin{equation}\label{equ-GCSF}
\hat X_t=|\hat k|^{\sigma-1}\hat k{\hat N},
\end{equation}
where ${\hat N}$ is the unit normal vector field on $\hat X$, $\hat k$ is the curvature of $\hat X$ with respect to $\hat N$ and $\sigma$ is a positive constant.  When $\sigma=1$, $\hat X$ is called a curve shortening flow (CSF). The behaviors of CSF were extensively studied in the past decades. See for example \cite{AL,An,Ga,GH,Gr,Hu}. When $\sigma=\frac{1}{3}$, $\hat X$ is called an affine curve shortening flow (ACSF) since the equation \eqref{equ-GCSF} is affine invariant in this case. It was first introduced by Sapiro and Tannenbaum \cite{ST} as an analogue of the CSF for affine geometry. For behaviors of the ACSF, see for example \cite{And1,ST,AST}. For the behaviors of the GCSF, see for example \cite{And3}. For a systematic investigation the topic, see the book \cite{CZ} by Chou and Zhu.

In this paper, we will consider self-similar solutions of the ACSF.  In the classical sense, a family of curves $\hat X(t,u)$ is said to be a self-similar solution of \eqref{equ-GCSF} if $\hat X(t,u)$  is a solution of \eqref{equ-GCSF} up to reparametrization, and moreover, the curve $\hat X(t,\cdot)$ at each time $t$ can be transformed from the initial curve $X(\cdot):=\hat X(0, \cdot)$ by a conformal map on $\R^2$ preserving orientation which is a composition of a dilation, a rotation and a translation of $\R^2$. Intuitively and generally speaking,  a self-similar solution is a special solution of \eqref{equ-GCSF} such that for each time $t$, the curve $\hat X(t,\cdot)$ is similar to the initial curve $X$. The similarity is measured by a group action on $\R^2$.  In the classical sense, the similarity is measured by the automorphism group $\Aut(\C)$ when viewing $\R^2$ as $\C$ naturally. Generally, we can measure similarity by any group action on $\R^2$. Therefore, we can extend the meaning of self-similar solutions in the following sense.
\begin{defn}\label{def-SSS}
Let $G$ be a Lie group acting on $\R^2$ and $X:\R\to \R^2$ be a curve. We say that $X$ generates a self-similar solution with respect to $G$ if there is a curve $g(t)$ in $G$ with $t\in [0,T)$ for some $T>0$ and $g(0)=e$ such that
\begin{equation}\label{equ-SSS}
\hat X(t,u)=g(t).X(u)
\end{equation}
 is a solution of \eqref{equ-GCSF} up to reparametrization. We simply call $X$ a $G$-self-similar solution of \eqref{equ-GCSF}. The curve $g(t)$ in $G$ is called the family  of self-similar actions acting on $X$.
\end{defn}
Noting the form of \eqref{equ-SSS}, we know that finding self-similar solutions in the sense of Definition \ref{def-SSS} is something like using the method of separating variables to obtain special solutions of a partial diffenretial equation. If $G$ is chosen to be the group $\Diff(\R^2)$ of diffeomorphisms acting on $\R^2$ in Definition \ref{def-SSS}, it is clear that any embedded curve developing a GCSF is a $\Diff(\R^2)$-self-similar solution of GCSF. According to the definition, self-similar solutions in the classical sense are $\Aut(\C)$-self-similar solutions.

$\Aut(\C)$-self-similar solutions for CSF were studied in \cite{AL,Al,Is,Ur} and finally completely classified by Halldorsson \cite{Ha1}. The reason for choosing the group $\Aut(\C)$ is that the symmetric group of CSF (see \cite{CL}) acting on $\R^2$ conformally. The list of $\Aut(\C)$-self-similar solutions for CSF is as follows (see \cite{Ha1}):
\begin{enumerate}
\item Translation: Only the Grim Reaper curve.
\item Expansion: A 1-D family of curves each of which is properly embedded and asymptotic to a boundary of a cone. These curves were first found by Ishimura \cite{Is} and Urbas \cite{Ur}.
\item Contraction: A 1-D family of curves each of which is contained in an annulus and consists of identical excursions between its two boundaries. The closed ones in the family were completely classified by Abresch and Langer \cite{AL} and are called the Abresch-Langer curves.
\item Rotatation: A 1-D family of curves each of which is properly embedded and spirals out to infinity. The ying-yang spirals first introduced by Altschuler  \cite{Al} are contained in the family. A different discussion of this case can also be found in \cite{CZ}.
\item Rotation and expansion: A 2-D family of curves each of which is properly embedded and spirals out to infinity.
\item Rotation and contraction: A 2-D family of curves each of which has one end asymptotic to a circle and the other is either asymptotic to the same circle or spirals out to infinity.
\end{enumerate}
The last two cases (5) and (6) were first discovered by Halldorsson \cite{Ha1}. Classification of self-similar solutions of the GCSF with respect to homothetic contractions was done by Andrews (see \cite{And1,And2} ). Nien-Tsai \cite{NT} considered translation self-similar solutions for much more general curve curvature flows including the GCSF. Similar classification for the mean curvature flow in $\R^{1,1}$ was also carried out in \cite{Ha2} by using a similar technique as in \cite{Ha1}.

Denote the group of affine transformations on $\R^2$ as $\Aff(\R^2)$. As computed in \cite{CL}, the symmetric group of the ACSF acts on $\R^2$ in the same way as $\Aff(\R^2)$. So, for the ACSF, instead of considering $\Aut(\C)$-self-similar solutions, it is more natural to consider $\Aff(\R^2)$-self-similar solutions. We call such self-similar solutions of the ACSF affine self-similar solutions.

In this paper, by using a similar technique as in \cite{Ha1}, we show that an affine self-similar solution $X$ of the ACSF must satisfy the equation:
\begin{equation}\label{equ-SSS-0}
\vv<BX+C,N_X>=k_{X}^\frac13
\end{equation}
where $B\in \R^{2\times 2}$ and  $C\in \R^2$. Here $N_X$ is the normal vector field of $X$ and $k_X$ is the curvature of $X$ with respect to $N_X$. Then, by an affine transformation, we are able to simplify \eqref{equ-SSS-0} and completely classify affine self-similar solutions of the ACSF into several cases. More precisely, we have the following result.
 \begin{thm}\label{thm-SSS-classification}
Let $X:\R\to \R^2$ be an affine self-similar solution of the ACSF satisfying \eqref{equ-SSS-0}. Then, there is an affine transformation $Y=QX+H$ of $X$ with $Q\in GL(2,\R)$ and $H\in \R^2$ such that $Y$ satisfies one of the following equations.
\begin{enumerate}
\item The case: $\det B=0$. We call this the degenerate case.
\begin{enumerate}
\item If $B=0$, then
\begin{equation}\label{equ-1-a}
\vv<\left(\begin{matrix}0\\1
\end{matrix}\right),N_Y>=k_Y^{\frac{1}{3}}.
\end{equation}
This is the translation case.
\item If $B\neq 0$ has a positive eigenvalue, and $C\in R(B)$ where $R(B)$ means the range of $B$ , then
\begin{equation}\label{equ-1-b}
\vv<\left(\begin{matrix}0&0\\0&1\end{matrix}\right)Y,N_Y>
=k_Y^{\frac{1}{3}}.
\end{equation}
This is the case of expansion in a single direction.
\item If $B\neq 0$ has a positive eigenvalue and $C\not\in R(B)$, then
\begin{equation}\label{equ-1-c}
\vv<\left(\begin{matrix}0&0\\0&1\end{matrix}\right) Y+\left(\begin{matrix}1\\0\end{matrix}\right),N_Y>
=k_Y^{\frac{1}{3}}.
\end{equation}
This is the case of expansion in a direction and translation in another direction.
\item If $B\neq 0$ has a negative eigenvalue, and $C\in R(B)$, then
\begin{equation}\label{equ-1-d}
\vv<\left(\begin{matrix}0&0\\0&-1\end{matrix}\right) Y,N_Y>
=k_Y^{\frac{1}{3}}.
\end{equation}
This is the case of contraction in a single direction.
\item If $B\neq 0$ has a negative eigenvalue and $C\not\in R(B)$, then
\begin{equation}\label{equ-1-e}
\vv<\left(\begin{matrix}0&0\\0&-1\end{matrix}\right) Y+\left(\begin{matrix}1\\0\end{matrix}\right),N_Y>
=k_Y^{\frac{1}{3}}.
\end{equation}
This is the case of contraction in a direction and translation in another direction.
\item If $B\neq 0$ has two zero eigenvalues and $C\in R(B)$, then
    \begin{equation}\label{equ-1-f}
\vv<\left(\begin{matrix}0&1\\0&0\end{matrix}\right)Y,N_Y>
=k_Y^{\frac{1}{3}}.
\end{equation}
We call this the skew steady case.
\item If $B\neq 0$ has two zero eigenvalues and $C\not\in R(B)$, then
\begin{equation}\label{equ-1-g}
\vv<\left(\begin{matrix}0&1\\0&0\end{matrix}\right)Y+
\left(\begin{matrix}0\\1\end{matrix}\right),N_Y>
=k_Y^{\frac{1}{3}}
\end{equation}
We call this the case of skew steady with translation.
\end{enumerate}
\item The case: $\det B\neq 0$. We call this the nondegenerate case. Let $\Delta=(\tr B)^2-4\det B$ which is the discriminant of the characteristic polynomial of $B$.
\begin{enumerate}
\item If $\Delta<0$ and $\tr B=0$, then
\begin{equation}\label{equ-2-a}
\vv<JY,N_Y>=k_Y^{\frac{1}{3}}
\end{equation}
where $J=\left(\begin{matrix}0&-1\\1&0
\end{matrix}\right)$. This is the case of rotation.
\item If $\Delta<0$ and $\tr B>0$, then
\begin{equation}\label{equ-2-b}
\vv<Y+aJY,N_Y>=k_Y^{\frac{1}{3}}.
\end{equation}
with $a>0$. This is the case of rotation and expansion.
\item If $\Delta<0$ and $\tr B<0$, then
\begin{equation}\label{equ-2-c}
\vv<-Y+aJY,N_Y>=k_Y^{\frac{1}{3}}.
\end{equation}
with $a>0$. This is the case of rotation and contraction.
\item If $\Delta=0$ and $B$ is diagonalizable with a double positive eigenvalue, then
\begin{equation}\label{equ-2-d}
\vv<Y,N_Y>=k_Y^{\frac{1}{3}}.
\end{equation}
This is the case of expansion.
\item If $\Delta=0$ and $B$ is diagonalizable with a double negative eigenvalue, then
\begin{equation}\label{equ-2-e}
-\vv<Y,N_Y>=k_Y^{\frac{1}{3}}.
\end{equation}
This is the case of contraction.
\item If $\Delta=0$ and $B$ is not diagonalizable with a double positive eigenvalue, then
\begin{equation}\label{equ-2-f}
\vv<\left(\begin{matrix}1&1\\0&1\end{matrix}\right)Y,N_Y>
=k_Y^{\frac{1}{3}}.
\end{equation}
We call this the case of skew expansion.
\item If $\Delta=0$ and $B$ is not diagonalizable with a double negative eigenvalue, then
\begin{equation}\label{equ-2-h}
\vv<\left(\begin{matrix}-1&1\\0&-1\end{matrix}\right)Y,N_Y>
=k_Y^{\frac{1}{3}}.
\end{equation}
We call this the case of skew contraction.
\item If $\Delta>0$, $\det B>0$ and $B$ has two different positive eigenvalues, then
    \begin{equation}\label{equ-2-i}
    \vv<\left(\begin{matrix}a&0\\0&1
    \end{matrix}\right)Y,N_Y>=k_Y^{\frac{1}{3}}
    \end{equation}
    with $0<a<1$. This is the case of expansion in two directions with different ratios.
\item If $\Delta>0$, $\det B>0$ and $B$ has two different negative eigenvalues, then
    \begin{equation}\label{equ-2-j}
    -\vv<\left(\begin{matrix}a&0\\0&1
    \end{matrix}\right)Y,N_Y>=k_Y^{\frac{1}{3}}
     \end{equation}
    with $0<a<1$. This is the case of contraction in two directions with different ratios.
\item If $\Delta>0$, $\det B<0$, then
    \begin{equation}
    \vv<\left(\begin{matrix}-a&0\\0&1
    \end{matrix}\right)Y,N_Y>=k_Y^{\frac{1}{3}}
    \end{equation}
    with $a>0$. This is the case of contraction in a direction and expansion in another direction.
\end{enumerate}
\end{enumerate}
\end{thm}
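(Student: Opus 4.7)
The key first step is to compute how equation \eqref{equ-SSS-0} transforms under an affine change of variables $Y = QX + H$, $Q\in GL(2,\R)$, $H\in\R^2$. Using $N_X = JT_X$ (with $J$ the $\pi/2$-rotation) together with the fundamental $2\times 2$ identity $Q^T J Q = (\det Q)\,J$, a direct computation gives
\[
N_Y = (\det Q)(Q^T)^{-1}N_X \cdot \frac{|X'|}{|QX'|},\qquad k_Y = (\det Q)\left(\frac{|X'|}{|QX'|}\right)^3 k_X.
\]
Substituting these into \eqref{equ-SSS-0}, the $|X'|$-dependent factors cancel and one obtains
\[
\langle \tilde B\,Y + \tilde C,\,N_Y\rangle = k_Y^{1/3}, \quad \tilde B = |\det Q|^{-2/3}QBQ^{-1},\quad \tilde C = |\det Q|^{-2/3}(QC - QBQ^{-1}H),
\]
valid for every $Q\in GL(2,\R)$ (the possible sign in $(\det Q)^{1/3}$ is absorbed by the invariance of \eqref{equ-SSS-0} under the simultaneous reversal $N\to -N$, $k\to -k$). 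Thus classifying \eqref{equ-SSS-0} up to affine change of variables is equivalent to classifying the pair $(B,C)$ under this action.

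The action on $B$ alone is conjugation combined with multiplication by the \emph{positive} scalar $|\det Q|^{-2/3}$, so $B$ is determined by its real Jordan form up to positive rescaling; in particular the signs of real eigenvalues and of the real part of any complex eigenvalue are invariants. One verifies that the canonical matrices listed in (1a)--(1g) and (2a)--(2j) represent distinct orbits and exhaust all possibilities: the $\det B = 0$ cases split according to whether $B = 0$, $B$ is a nonzero nilpotent Jordan block, or $B$ has one zero and one nonzero eigenvalue (the latter normalizable to $+1$ or $-1$ by positive rescaling); the $\det B \neq 0$ cases split by the sign of $\Delta = (\tr B)^2 - 4\det B$ (complex conjugate, double real, or distinct real eigenvalues), and then further by signs of the eigenvalues or their real parts and by diagonalizability. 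In each case a suitable $Q_0$ brings $B$ into the canonical form of the statement.

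Once $B$ is fixed in canonical form, the residual freedom consists of $Q$ lying in the stabilizer of that canonical form (up to positive scalar) together with arbitrary $H\in \R^2$. The $H$-translation adds to $\tilde C$ an arbitrary element of $R(B)$, so $C$ is classified by its class in $\R^2/R(B)$ acted on by this residual stabilizer. When $\det B \neq 0$ one has $R(B) = \R^2$ and $H$ alone kills $\tilde C$, producing the homogeneous equations (2a)--(2j). When $\det B = 0$ and $B \neq 0$, the range $R(B)$ is one-dimensional and two sub-cases arise: if $C\in R(B)$ one gets the homogeneous equations (1b), (1d), (1f); if $C\notin R(B)$, then after using $H$ to annihilate the $R(B)$-component of $C$ the residual stabilizer (which acts by positive scalars on $\R^2/R(B)$) normalizes the remaining class to the specific representative appearing in (1c), (1e), (1g). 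For $B = 0$ the action $C \mapsto |\det Q|^{-2/3}QC$ is transitive on $\R^2\setminus\{0\}$, giving (1a); the excluded case $C = 0$ corresponds to $k_Y \equiv 0$, i.e., a straight line.

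The main computational obstacle is the explicit determination, in each degenerate subcase, of the stabilizer $\{Q : QBQ^{-1} = c\,B \text{ for some } c > 0\}$ and the verification that its induced action on $\R^2/R(B)$ is transitive on nonzero classes. These are routine $2\times 2$ exercises: for the nilpotent Jordan block the stabilizer is upper-triangular with equal diagonal entries, while for the rank-one diagonal cases $B = \left(\begin{smallmatrix}0 & 0\\ 0 & \pm 1\end{smallmatrix}\right)$ it is the diagonal subgroup; in both situations the induced action on the one-dimensional quotient is transitive on nonzero elements, completing the normalization.
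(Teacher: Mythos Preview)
Your argument is correct in substance and rests on the same transformation law as the paper's Lemma~\ref{lem-SSS-aff}; the paper then proceeds by explicitly exhibiting $Q$ and $H$ case by case, whereas you package the same computations as an orbit classification of the pair $(B,C)$ under $(B,C)\mapsto(|\det Q|^{-2/3}QBQ^{-1},\,|\det Q|^{-2/3}(QC-QBQ^{-1}H))$. This is a legitimate and somewhat cleaner way to organize the proof, and it makes the exhaustiveness and mutual exclusivity of the seventeen cases more transparent than the paper's bare constructions do.

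One inaccuracy to fix: your descriptions of the residual stabilizers in the last paragraph are not correct. For the nilpotent block $B=\left(\begin{smallmatrix}0&1\\0&0\end{smallmatrix}\right)$, the set $\{Q:QBQ^{-1}=cB,\ c>0\}$ consists of upper-triangular matrices $\left(\begin{smallmatrix}p&q\\0&s\end{smallmatrix}\right)$ with $p/s>0$, not with $p=s$; for $B=\left(\begin{smallmatrix}0&0\\0&\pm1\end{smallmatrix}\right)$ it is the diagonal matrices with $|ps|=1$, not the full diagonal subgroup. Fortunately your conclusion survives: in the nilpotent case the induced action on the second coordinate of $C$ is $c_2\mapsto |ps|^{-2/3}s\,c_2$, and in the rank-one diagonal case the action on the first coordinate is $c_1\mapsto p\,c_1$; in both situations the scalar ranges over all of $\R\setminus\{0\}$, so transitivity on nonzero classes holds as you claim. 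With these stabilizers corrected, your argument goes through.
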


The second part of this paper is to solve the equations or give descriptions of the curves in the degenerate case of Theorem \ref{thm-SSS-classification}. The nondegenerate case will be handled in another paper. We summarize the conclusion as follows:
\begin{enumerate}
\item Translation: Parabolas. This was first found in \cite{COT} where they called this the affine Grim Reaper.
\item Expansion in a single direction: Hyperbolas, graph of a convex (concave) function with only one minimum (maximum) point and graph of an increasing (decreasing) function with only one inflection point. For details, see Theorem \ref{thm-1-b}. The hyperbolas in this case were also found in \cite{CL}.
\item Expansion in a direction and translation in another direction: There are four kinds of curves. See Figure \ref{Fig-1-c-1}, Figure \ref{Fig-1-c-2}, Figure \ref{Fig-1-c-3} and Figure \ref{Fig-1-c-4}. For details, see Theorem \ref{thm-1-c}.
\item Contraction in a single direction: Graph of a periodic function. For details, see Theorem \ref{thm-1-d}.
\item Contraction in a direction and translation in another direction: Graph of a function $f(x)$ oscillating around the $x$-axis or a combination of graphs of two functions oscillating around the $x$-axis. For details, see Theorem \ref{thm-1-e}.
\item Skew steady: Graphs of some quintic functions. This solution was also found in \cite{COT}.
\item Skew steady with translation: Parabolas and curves given in the parametrization form in \eqref{equ-1-g-x} and \eqref{equ-1-g-y}. This is the case of parabolic scooper in \cite{COT}.
\end{enumerate}
Lines are also trivial solutions of the cases (1)--(6) which are not given in the list above. For details, see Theorem \ref{thm-1-a}-- Theorem \ref{thm-1-g}.

 We would like to mention that, in \cite{CL}, Chou and Li systematically computed the group $S(\sigma)$ of invariant transformations of \eqref{equ-GCSF} viewed as a partial differential equation with three variables (one time dimension and two spatial dimensions), an optimal system of 1-D subalgebra with respect to self-adjoint action, and classified the invariant solutions of \eqref{equ-GCSF} with respect to a one-parameter subgroup of $S(\sigma)$. According to the definition of self-similar solutions, self-similar solutions and invariant solutions with respect to a one-parameter subgroup of $S(\sigma)$ are not a priori identical. In fact, the two sets of special solutions have nonempty intersection and are not equal. Moreover, Chou and Li \cite{CL} only derived the equations of invariant solutions and did not give descriptions of all the invariant solutions such as in the work of Halldorsson \cite{Ha1}. Similar computation as in \cite{CL} for the centro-affine curve shortening flow was carried out in \cite{WYW}. Invariant solutions for higher dimensional curve shortening flows was studied in \cite{AAAW}.

The organization of the remaining parts of this paper is as follows. In Section 2, we will derive \eqref{equ-SSS-0} and prove Theorem \ref{thm-SSS-classification}. In Section 3, we will handle the degenerate case of Theorem \ref{thm-SSS-classification}.

\noindent\emph{Acknowledgement.} The authors would like to thank Professor Xiaoliu Wang for helpful discussions on self-similar solutions and invariant solutions.

\section{Equations of  Affine self-similar solutions}
In this section, we derive the equation of affine self-similar solutions of the ACSF by a similar technique as in \cite{Ha1}.

\begin{lem}\label{lem-SSS-eq}
An immersed curve $X:\R\to \R^2$ is an affine self-similar solution of ACSF if and only if it satisfies the following equation:
\begin{equation}\label{equ-SSS0}
\vv<BX+C,N>=k^\frac{1}{3},
\end{equation}
where $B\in \R^{2\times 2}$  and $C\in \R^2$. Moreover, the family $(A(t),H(t))$ of self-similar actions on $X$ satisfying \eqref{equ-SSS0} is given by
\begin{equation}\label{equ-A0}
A(t)=\left\{\begin{array}{ll}\exp(tB)&\tr B=0\\
\exp\left(\frac{3}{2\tr B}\ln \left(1+\frac23\tr B\cdot t\right)B\right)&\tr B\neq 0
\end{array}\right.
\end{equation}
and
\begin{equation}\label{equ-H0}
H(t)=\int_0^t(\det A)^{-\frac{2}3}A(t)dt\cdot C.
\end{equation}
\end{lem}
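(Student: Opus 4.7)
The plan is to substitute the self-similarity ansatz $\hat X(t,u)=A(t)X(u)+H(t)$ (with $A(0)=I$, $H(0)=0$) into the ACSF, express the transformed normal and curvature in terms of those of $X$, and isolate a time-independent geometric condition.

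\smallskip
\noindent\emph{Step 1 (reduction to the normal component).} A tangential velocity is absorbed by reparametrization, so the ACSF with $\sigma=\tfrac13$ is equivalent along the curve to $\langle\hat X_t,\hat N\rangle=\hat k^{1/3}$. Differentiating, $\hat X_t=A'(t)X+H'(t)$.

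\smallskip
\noindent\emph{Step 2 (affine transformation rules).} Fix $t$ and use the unit-speed parametrization of $X$ with tangent $T$ and normal $N=JT$, where $J$ is rotation by $\pi/2$. For $Y=A X+H$, one has $Y_u=AT$, so $|Y_u|=|AT|$. The $2\times 2$ identity $JA=(\det A)(A^{-1})^T J$ yields
\begin{equation*}
\hat N \;=\; \frac{\det A}{|AT|}\,(A^{-1})^T N,
\end{equation*}
and from $\hat k=\det(Y_u,Y_{uu})/|Y_u|^3$ a direct computation gives $\hat k=(\det A)\,k/|AT|^3$. Substituting both into $\langle\hat X_t,\hat N\rangle=\hat k^{1/3}$ and using $\langle v,(A^{-1})^T N\rangle=\langle A^{-1}v,N\rangle$, the factors of $|AT|$ cancel precisely because of the exponent $\tfrac13$, leaving
\begin{equation*}
\bigl\langle M(t)X+v(t),N\bigr\rangle=k^{1/3},\qquad M(t):=(\det A)^{2/3}A^{-1}A',\ \ v(t):=(\det A)^{2/3}A^{-1}H'.
\end{equation*}

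\smallskip
\noindent\emph{Step 3 (extraction of $B,C$ and integration of the ODEs).} Since the right-hand side is independent of $t$, differentiating gives $\langle M'(t)X+v'(t),N\rangle\equiv 0$ along $X$. For a non-straight curve, the pairs $(X(u),N(u))$ provide enough independent linear constraints to force $M'(t)\equiv 0$ and $v'(t)\equiv 0$, so $M$ and $v$ reduce to constants $B$ and $C$; this is \eqref{equ-SSS0}, and the line case is compatible with a trivial choice. Conversely, given $B,C$, the ODE $A'=(\det A)^{-2/3}AB$ with $A(0)=I$ is integrated by first taking determinants: $\delta(t):=\det A(t)$ satisfies $\delta'=\delta^{1/3}\tr B$ with $\delta(0)=1$, yielding $\delta^{2/3}=1+\tfrac{2}{3}(\tr B)t$. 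The change of variable $\tau(t):=\int_0^t\delta(s)^{-2/3}\,ds$ reduces the matrix ODE to $dA/d\tau=AB$, integrating to $A(t)=\exp(\tau(t)B)$, i.e.\ \eqref{equ-A0}. Finally $H'=(\det A)^{-2/3}AC$ integrates to \eqref{equ-H0}.

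\smallskip
The technical core is Step 2: the transformation law for $\hat k$ and $\hat N$ under a general linear map introduces $|AT|$-powers that fail to compensate except when $\sigma=\tfrac13$, in which case the $|AT|^{-1}$ from $\hat N$ exactly matches the $|AT|^{-1}$ from $\hat k^{1/3}$. This cancellation is the affine invariance of the ACSF and is the reason the equation $\langle BX+C,N\rangle=k^{1/3}$ is linear in $X$ with constant coefficients; once this is established, the remaining ODE integration is a routine scalar-reduction argument.
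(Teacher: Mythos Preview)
Your proof is correct and follows essentially the same route as the paper: substitute the ansatz, compute the affine transformation rules for $\hat N$ and $\hat k$, observe the $|AT|$-cancellation that is special to $\sigma=\tfrac13$, and then integrate the resulting matrix ODE via the determinant reduction. One minor difference: in Step~3 you argue that $M(t)$ and $v(t)$ are constant for all $t$ by differentiating and invoking ``enough independent linear constraints'' from a non-straight curve; the paper bypasses this by simply evaluating the identity $\langle M(t)X+v(t),N\rangle=k^{1/3}$ at $t=0$, which immediately gives $B=A'(0)$ and $C=H'(0)$ since $A(0)=I$ and $\det A(0)=1$. Your constancy claim is true (and is implicitly verified by the converse construction), but the rigidity argument as stated is a bit informal---you would need to check that the map $(M,v)\mapsto\bigl(u\mapsto\langle MX(u)+v,N(u)\rangle\bigr)$ has trivial kernel for a non-degenerate curve---and it is not needed for the lemma. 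The ODE integration in your Step~3 matches the paper's exactly.
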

\begin{proof}
Let $X:\R\rightarrow\R^{2}$ be an affine self-similar solution of the ACSF.
Then, by Definition \ref{def-SSS}, there is a curve $(A(t),H(t))$ in $\Aff(\R^2)$ for $t\in [0,T)$ such that
\begin{equation}\label{equ-hat-X}
\hat{X}(t,u)=A(t)X(u)+H(t),
\end{equation}
is a solution \eqref{equ-GCSF} up to reparametrization with $\sigma=1/3$. That is to say, $\hat X$ satisfies:
\begin{equation}\label{equ-GCSF-2}
\left\langle\hat X_t,\hat{N}\right\rangle=\hat k^{\frac13}.
\end{equation}
Here $A(t)\in GL_+(2,\R)$, and $H(t)\in \R^2$. Moreover, $A(0)=I_2$ and $H(0)=0$.

For the curve $X(u)$, the unit tangent vector of $X$ is $T=\frac{X_u}{\|X_u\|}$. The unit normal vector is $N=JT$ where
\begin{equation}\label{equ-J}
J=\left(\begin{matrix}
0&-1\\1&0
\end{matrix}\right).
\end{equation}
The curvature of $X$ with respect to $N$ is
\begin{equation}
k=\frac{\det(X_u,X_{uu})}{\|X_u\|^3}.
\end{equation}
Similarly for the curve $\hat X$, by \eqref{equ-hat-X}, the unit tangent vector of $\hat X$ is
\begin{equation}\label{equ-hat-T}
\hat T=\frac{\hat X_u}{\|\hat X_u\|}=\frac{AX_u}{\|AX_u\|}=\frac{\|X_u\|}{\|AX_u\|}AT,
\end{equation}
the unit normal vector field of $\hat X$ is
\begin{equation}\label{equ-hat-N}
\hat N=J\hat T=\frac{\|X_u\|}{\|AX_u\|}JAT=\frac{\|X_u\|}{\|AX_u\|}JAJ^tN=\frac{\|X_u\|\det A}{\|AX_u\|}\left(A^{-1}\right)^tN,
\end{equation}
and the curvature of $\hat X$ with respect to $\hat N$ is
\begin{equation}\label{equ-hat-k}
\hat k=\frac{\det(\hat X_{u},\hat X_{uu})}{\|\hat X_u\|^3}=\frac{\det A\det(X_u,X_{uu})}{\|AX_u\|^3}=\frac{\|X_u\|^3\det A}{\|AX_u\|^3}k.
\end{equation}
Substituting \eqref{equ-hat-N} and \eqref{equ-hat-k} into \eqref{equ-GCSF-2}, we have
\begin{equation}\label{equ-SSS-1}
(\det A)^{\frac23}\langle A^{-1}A'X(u)+A^{-1}H',N\rangle=k^\frac13.
\end{equation}
Noting that the RHS of \eqref{equ-SSS-1} is independent of $t$ while the LHS of \eqref{equ-SSS-1} is depending on $t$ and letting $t=0$, we obtain
\begin{equation}\label{equ-SSS-2}
\vv<BX+C,N>=k^\frac13
\end{equation}
where $B=A'(0)$ and $C=H'(0)$. This means that an affine self-similar solution of the ACSF must satisfy \eqref{equ-SSS-2}. Conversely, for a curve satisfying \eqref{equ-SSS-2}, by letting
\begin{equation}\label{equ-A-H}
\left\{\begin{array}{l}(\det A)^{\frac23}A^{-1}A'=B\\
(\det A)^{\frac23}A^{-1}H'=C\\
A(0)=I_2\\
H(0)=0.
\end{array}\right.
\end{equation}
We can obtain the self-similar action $(A(t),H(t))$ on $X$ that produces a solution of \eqref{equ-GCSF}.

By the first and third equation of \eqref{equ-A-H}, we have
\begin{equation}\label{equ-A}
A(t)=\exp\left(\int_0^t(\det A)^{-\frac{2}{3}}dt \cdot B\right).
\end{equation}
Taking determinant of \eqref{equ-A}, we have
\begin{equation}
\det A=\exp\left(\int_0^t(\det A)^{-\frac{2}{3}}dt \cdot \tr B\right).
\end{equation}
So, $\det A$ satisfies
\begin{equation}\label{equ-det-A}
\left\{\begin{array}{l}\frac{d}{dt}\det A=\tr B (\det A)^{\frac{1}{3}}\\
\det A(0)=1.
\end{array}\right.
\end{equation}
Solving \eqref{equ-det-A}, we obtain
\begin{equation}\label{equ-det-A-1}
\det A=\left(1+\frac23\tr B\cdot  t\right)^\frac32.
\end{equation}
Substituting this into \eqref{equ-A}, we have
\begin{equation}\label{equ-A-2}
A(t)=\left\{\begin{array}{ll}\exp(tB)&\tr B=0\\
\exp\left(\frac{3}{2\tr B}\ln \left(1+\frac23\tr B\cdot t\right)B\right)&\tr B\neq 0.
\end{array}\right.
\end{equation}
Then, by substituting \eqref{equ-A-2} and \eqref{equ-det-A-1} into \eqref{equ-A-H}, one can obtain the expression of $H$.
This completes the proof of the lemma.
\end{proof}
\begin{rem}
Note that when $\tr B<0$, by the expression of $A(t)$ in the lemma, the maximal existence time of the ACSF is $T=-\frac{3}{2\tr B}$.
\end{rem}
Because of the affine invariance of the ACSF, we have also the following relation of an affine self-similar solution and its affine transformation.

\begin{lem}\label{lem-SSS-aff}
Let $X:\R\to \R^2$ be an affine self-similar solution satisfying
\begin{equation}
\vv<BX+C,N_X>=k_X^{\frac{1}{3}}
\end{equation}
where $N_X$ and $k_X$ are the normal vector field and curvature of $X$ respectively. Let $Q\in GL(2,\R)$ and $Y=Q^{-1}X$. Then $Y$ satisfies
\begin{equation}
(\det Q)^\frac{2}3\vv<Q^{-1}BQY+Q^{-1}C,N_Y>=k_Y^{\frac{1}{3}}.
\end{equation}
\end{lem}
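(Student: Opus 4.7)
The plan is to mimic, almost verbatim, the transformation calculation already carried out in the proof of Lemma \ref{lem-SSS-eq}, but in the reverse direction: there the new curve was $\hat X = AX + H$ and here the new curve is $Y = Q^{-1}X$, i.e.\ $X = QY$. First I differentiate $X = QY$ to get $X_u = QY_u$ and $X_{uu} = QY_{uu}$, from which the unit tangent transforms as
\[
T_X = \frac{\|Y_u\|}{\|QY_u\|}\, Q T_Y.
\]
Applying the rotation $J$ and using the algebraic identity $JQ = (\det Q)(Q^{-1})^t J$ (this is the key observation, easily checked on a general $2\times 2$ matrix), the unit normal transforms as
\[
N_X = \frac{\|Y_u\|\,\det Q}{\|QY_u\|}\,(Q^{-1})^t N_Y.
\]
Next, using $\det(QY_u,QY_{uu})=\det Q\cdot\det(Y_u,Y_{uu})$, the signed curvature transforms as
\[
k_X = \frac{\|Y_u\|^3\,\det Q}{\|QY_u\|^3}\, k_Y.
\]

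Now I substitute into the given equation $\langle BX+C,N_X\rangle = k_X^{1/3}$ with $X=QY$. The left-hand side becomes
\[
\left\langle BQY+C,\ \frac{\|Y_u\|\,\det Q}{\|QY_u\|}(Q^{-1})^t N_Y\right\rangle
= \frac{\|Y_u\|\,\det Q}{\|QY_u\|}\,\langle Q^{-1}BQ\,Y+Q^{-1}C,\,N_Y\rangle,
\]
using the adjoint identity $\langle v,(Q^{-1})^t w\rangle = \langle Q^{-1}v,w\rangle$. The right-hand side becomes
\[
\left(\frac{\|Y_u\|^3\,\det Q}{\|QY_u\|^3}\,k_Y\right)^{1/3}
= \frac{\|Y_u\|}{\|QY_u\|}\,(\det Q)^{1/3}\,k_Y^{1/3}
\]
(the real cube root is well defined regardless of the sign of $\det Q$ or $k_Y$). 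Cancelling the common factor $\|Y_u\|/\|QY_u\|$ from both sides and dividing by $(\det Q)^{1/3}$ yields the desired identity
\[
(\det Q)^{2/3}\,\langle Q^{-1}BQ\,Y+Q^{-1}C,\,N_Y\rangle = k_Y^{1/3}.
\]

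There is no real obstacle here: every step is a bookkeeping computation of the same flavor as \eqref{equ-hat-T}--\eqref{equ-hat-k}. The only point requiring a moment of care is the sign convention when $\det Q<0$, where one must check that the identity $JQ=(\det Q)(Q^{-1})^tJ$ continues to supply the correct factor of $\det Q$ (rather than $|\det Q|$) in the formula for $N_X$, and that the cube-root identities are understood as real cube roots. Once those are handled, the conclusion follows by direct substitution, so the proof will consist essentially of the three transformation formulas for $T$, $N$, $k$ followed by one line of algebra.
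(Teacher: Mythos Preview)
Your proposal is correct and follows essentially the same approach as the paper: the paper's proof simply cites the transformation formulas \eqref{equ-hat-N} and \eqref{equ-hat-k} (with $Q$ playing the role of $A$) to get $N_X=\frac{\|Y_u\|\det Q}{\|QY_u\|}(Q^{-1})^tN_Y$ and $k_X=\frac{\|Y_u\|^3\det Q}{\|QY_u\|^3}k_Y$, then substitutes, which is exactly what you do in more explicit detail. Your extra remark on the sign of $\det Q$ and real cube roots is a welcome clarification not spelled out in the paper.
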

\begin{proof}
By \eqref{equ-hat-N} and \eqref{equ-hat-k}, we have
\begin{equation}
N_X=\frac{\|Y_u\|\det Q}{\|QY_u\|}(Q^{-1})^tN_Y
\end{equation}
and
\begin{equation}
k_X=\frac{\|Y_u\|^3\det Q}{\|QY_u\|^3}k_Y.
\end{equation}
Substituting these into the equation of $X$, we complete the proof of the lemma.
\end{proof}

We are now ready to prove Theorem \ref{thm-SSS-classification}.
\begin{proof}[Proof of Theorem \ref{thm-SSS-classification}]We will discuss the existence of the affine transformation case by case.
\begin{enumerate}
\item[(1-a)] Since $C\neq 0$, there is clearly a non-degenerate matrix $Q$ such that
    \begin{equation}
    (\det Q)^\frac{2}{3}Q^{-1}C=(0,1)^t.
    \end{equation}
    Let $Y=Q^{-1}X$. By Lemma \ref{lem-SSS-aff}, we get the conclusion.
\item[(1-b)]  Note that there is a $H\in \R^2$ such that $BH=C$. Let $Y_1=X+H$. It is clear that $Y_1$ satisfies:
    \begin{equation}
    \vv<BY_1,N_{Y_1}>=k_{Y_1}^{\frac{1}{3}}.
    \end{equation}
    Moreover, there is a $Q_1\in GL(2,\R)$ with $\det Q_1=\pm 1$ such that
\begin{equation}
Q_1^{-1}BQ_1=\left(\begin{matrix}0&0\\0&\lambda
\end{matrix}\right).
\end{equation}
with $\lambda>0$. Let $Y=\lambda^\frac{3}{4}Q_1^{-1}Y_1$. Then, by Lemma \ref{lem-SSS-aff}, we get the conclusion.
\item[(1-c)] Let $Y_1=\lambda^\frac{3}{4}Q_1^{-1}X$ where $Q_1$ is the same as in the proof of (1-b). Then, as in (1-b), $Y_1$ satisfies:
    \begin{equation}
    \vv<\left(\begin{matrix}0&0\\0&1
\end{matrix}\right)Y_1+C_1,N_{Y_1}>=k_{Y_1}^{\frac{1}{3}}
    \end{equation}
    where $C_1$ is not in the range of $\left(\begin{matrix}0&0\\0&1
\end{matrix}\right)$. So $C_1=(a,b)^t$ with $a\neq 0$. Let $Y_2=Y_1+(0,b)^t$. Then, it is clear that
\begin{equation}
    \vv<\left(\begin{matrix}0&0\\0&1
\end{matrix}\right)Y_2+\left(\begin{matrix}a\\0
\end{matrix}\right),N_{Y_2}>=k_{Y_2}^{\frac{1}{3}}.
    \end{equation}
Finally, let $Y=\left(\begin{matrix}\frac1{a}&0\\0&a
\end{matrix}\right)Y_2$.
Then,  by Lemma \ref{lem-SSS-aff}, we get the conclusion.
\item[(1-d)] The proof is the same as that of (1-b).
\item[(1-e)] The proof is the same as that of (1-c).
\item[(1-f)] Let $H\in \R^2$ be such that $BH=C$ and let $Y_1=X+H$. Then
\begin{equation}
    \vv<BY_1,N_{Y_1}>=k_{Y_1}^{\frac{1}{3}}.
    \end{equation}
 Let $Q_1\in GL(2,R)$ with $\det Q_1=\pm 1$ be such that
\begin{equation}
Q_1^{-1}BQ_1=\left(\begin{matrix}0&1\\0&0
\end{matrix}\right)
\end{equation}
and $Y=Q_1^{-1}Y_1$. By Lemma \ref{lem-SSS-aff}, we obtain the conclusion.

\item[(1-g)]
 Let $Q_1\in GL(2,R)$ with $\det Q_1=\pm1$ be such that
\begin{equation}
Q_1^{-1}BQ_1=\left(\begin{matrix}0&1\\0&0
\end{matrix}\right)
\end{equation}
and $Y_1=Q_1^{-1}X$. By Lemma \ref{lem-SSS-aff},
\begin{equation}
\vv<\left(\begin{matrix}0&1\\0&0
\end{matrix}\right)Y_1+C_1,N_{Y_1}>=k_{Y_1}^{\frac{1}{3}}
\end{equation}
with $C_1=(a,b)^t$ is not in the range of $\left(\begin{matrix}0&1\\0&0
\end{matrix}\right)$ and hence $b\neq 0$. Let $Y_2=Y_1+(0,a)^t$. Then,
\begin{equation}
\vv<\left(\begin{matrix}0&1\\0&0
\end{matrix}\right)Y_2+\left(\begin{matrix}0\\b
\end{matrix}\right),N_{Y_2}>=k_{Y_2}^{\frac{1}{3}}.
\end{equation}
Let $Y=\left(\begin{matrix}b^\frac53&0\\0&b^\frac13
\end{matrix}\right)Y_2$. Then,  by Lemma \ref{lem-SSS-aff}, we obtain the conclusion.
\item[(2-a)] Let $Y_1=X+B^{-1}C$. Then, we have
\begin{equation}
    \vv<BY_1,N_{Y_1}>=k_{Y_1}^{\frac{1}{3}}.
    \end{equation}

In this case, $B$ has eigenvalues $\pm ci$ with $c>0$. So, there is a $Q_1\in GL(2,\R)$ with $\det Q_1=\pm1$, such that
$$Q_1^{-1}BQ_1=cJ.$$
Here $J$ is the matrix defined in \eqref{equ-J}. Let $Y=c^{\frac{3}4}Q_1^{-1}Y_1$. Then, by Lemma \ref{lem-SSS-aff}, we get the conclusion.
\item[(2-b)] In this case, $B$ has eigenvalues $b\pm ci$ with $b>0$ and $c>0$. So, there is a $Q_1\in GL(2,\R)$ with $\det Q_1=\pm 1$ such that $$Q_1^{-1}BQ_1=bI+cJ.$$
    Letting $Y=b^{\frac{3}4}Q_1^{-1}Y_1$ where $Y_1$ is the same as in (2-a) and by Lemma \ref{lem-SSS-aff}, we get the conclusion.
\item[(2-c)] The proof is the same that of (2-b).
\item[(2-d)] The proof is similar with that of (2-a).
\item[(2-e)] The proof is similar with that of (2-a).
\item[(2-f)] In this case, there is a $Q_1\in GL(2,\R)$ with $\det Q_1=\pm 1$ such that
\begin{equation}
Q_1^{-1}BQ_1=\left(\begin{matrix}\lambda&1\\0&\lambda
\end{matrix}\right)
\end{equation}
with $\lambda>0$. Let $Y=\left(\begin{matrix}\lambda^\frac54&0\\0&\lambda^\frac{1}{4}
\end{matrix}\right)Q_1^{-1} Y_1$ with $Y_1$ the same as in (2-a). Then, by Lemma \ref{lem-SSS-aff}, we get the conclusion.
\item[(2-g)] The proof is similar with that of (2-f).
\item[(2-h)] The proof is similar with that of (2-a).
\item[(2-i)] The proof is similar with that of (2-a).
\item[(2-j)] The proof is similar with that of (2-a).
\end{enumerate}
\end{proof}
\section{The case: $\det B=0$.}
In this section, we solve the equations or give descriptions  of the curves in the cases (1-a)--(1-g) in Theorem \ref{thm-SSS-classification}. We will discuss case by case.

\begin{thm}[Translation]\label{thm-1-a}The solutions of \eqref{equ-1-a} are  the parabolas $y=\frac{1}{2}x^2+C_1x+C_2$ and the lines $x=C$ where $C_1$, $C_2$ and $C$ are arbitrary constants .
\end{thm}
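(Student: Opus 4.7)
The strategy is to rewrite \eqref{equ-1-a} as a reparametrization-invariant ODE in the components of $Y$, reduce it to a trivial second-order ODE in the graph case, and then patch the local conclusion globally.

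First, for a regular parametrization $Y(u) = (x(u), y(u))$, expressing $N_Y = (-y', x')/\|Y'\|$ and $k_Y = (x'y'' - y'x'')/\|Y'\|^3$ in coordinates shows that \eqref{equ-1-a} is equivalent, after cubing both sides and clearing the common factor, to
\begin{equation*}
(x')^3 = x'y'' - y'x''.
\end{equation*}

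Next I would analyze this ODE in two mutually exclusive cases, according to whether $x'$ vanishes identically. On any open subinterval where $x' \neq 0$ the curve is locally a graph $y = f(x)$, and using the identity $\tfrac{d}{du}(y'/x') = (x'y'' - y'x'')/(x')^2$ the ODE collapses to $x' = \tfrac{d}{du} f'(x(u))$, i.e. $f''(x) = 1$; integrating twice gives $f(x) = \tfrac12 x^2 + C_1 x + C_2$. If instead $x' \equiv 0$, then $x$ is a constant $C$ and $Y$ parametrizes a vertical line $x = C$, which satisfies \eqref{equ-1-a} trivially since both sides vanish.

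Finally I would argue that the zero set $\{u : x'(u) = 0\}$ is clopen in the parameter interval, ruling out mixed solutions: at any hypothetical point $u_0$ with $x'(u_0) = 0$ but $x' \neq 0$ on one side, regularity forces $y'(u_0) \neq 0$ and hence a vertical tangent at $u_0$, whereas the parabolic arc on that side has tangent of finite slope $x(u_0) + C_1$ in the limit — a contradiction. Hence the solution is globally a single parabola or a single vertical line, and a direct check verifies both families solve \eqref{equ-1-a}.

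I do not anticipate any substantive obstacle; the only mildly delicate step is the gluing argument just sketched, which rests on the elementary observation that a parabola $y = \tfrac12 x^2 + C_1 x + C_2$ has no vertical tangent.
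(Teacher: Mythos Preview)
Your proof is correct and follows essentially the same route as the paper: both reduce \eqref{equ-1-a} to the graph ODE $y_{xx}=1$ (yielding the parabolas) and identify the vertical lines $x=C$ separately. Your clopen gluing argument to exclude mixed solutions is a welcome bit of extra care that the paper leaves implicit.
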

Although the general case was discussed in \cite{CZ} and this case was also solved in \cite{COT}, we will also present the proof here for completeness.
\begin{proof}[Proof of Theorem \ref{thm-1-a} ] First, suppose that $Y$ is locally of the form $(x,y(x))$. Then,
\begin{equation}
N=(1+y_x^2)^{-\frac12}(-y_x,1)^t
\end{equation}
and
\begin{equation}
k=(1+y_x^2)^{-\frac32}y_{xx}.
\end{equation}
Substituting these into \eqref{equ-1-a}, we have
\begin{equation}
y_{xx}=1.
\end{equation}
So, $y=\frac{1}{2}x^2+C_1x+C_2$.

On the other hand, when assume that $Y$ is locally of the form $(x(y),y)$, we have
\begin{equation}
N=(1+x_y^2)^{-\frac{1}2}(-1,x_y)^t
\end{equation}
and
\begin{equation}
k=-(1+x_y^2)^{-\frac32}x_{yy}.
\end{equation}
Substituting these into \eqref{equ-1-a}, we have
\begin{equation}
x_{yy}=-x_y^3.
\end{equation}
It is clear that $x_y=0$ and hence $x=C$ is a solution of the equation.
\end{proof}
\begin{thm}[Expansion in a single direction]\label{thm-1-b}Solutions of \eqref{equ-1-b} are the following curves:
\begin{enumerate}
\item The lines $x=C$ and $y=0$ with $C$ any constant.
\item The hyperbolas $(x-C)y=\pm\sqrt 2$ with $C$ any constant.
\item $(x-C_2)^2-\left(\int_{\small{\sqrt[4]{2C_1}}}^y\frac{1}{\sqrt{\frac{1}{2}\xi^4-C_1}}d\xi\right)^2=0$ with $y\geq \sqrt[4]{2C_1}$ which is the graph of a convex function with only one minimum point at $x=C_2$. Here $C_1$ is a positive constant.
\item $(x-C_2)^2-\left(\int^{\small{-\sqrt[4]{2C_1}}}_{y}\frac{1}{\sqrt{\frac{1}{2}\xi^4-C_1}}d\xi\right)^2=0$ with $y\leq- \sqrt[4]{2C_1}$ which is the graph of a concave function with only one maximum point $x=C_2$. Here $C_1$ is a positive constant.
\item $x=\int_0^y\frac{1}{\sqrt{\frac12\xi^4+C_1}}d\xi+C_2$ with $C_1>0$ which is  the graph of an increasing function with only one inflection point $x=C_2$.
\item $x=-\int_0^y\frac{1}{\sqrt{\frac12\xi^4+C_1}}d\xi+C_2$ with $C_1>0$ which is  the graph of a decreasing function with only one inflection point $x=C_2$.
\end{enumerate}
\end{thm}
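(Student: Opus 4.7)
The plan is to reduce the geometric equation (1-b) to an elementary ODE by using the two graph parametrizations, exactly as in the proof of Theorem \ref{thm-1-a}. Writing $Y$ locally as $(x,y(x))$, the inner product $\vv<(0,y)^t,N>$ equals $y/\sqrt{1+y_x^2}$, while $k^{1/3}=y_{xx}^{1/3}/\sqrt{1+y_x^2}$ (the cube root is taken as the real one, so sign is not an issue). Cancelling the common factor $1/\sqrt{1+y_x^2}$ and cubing reduces \eqref{equ-1-b} to the autonomous second order ODE
\begin{equation}
y_{xx}=y^3.
\end{equation}

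Next I would integrate this ODE by the standard trick: multiply by $y_x$ and integrate in $x$ to obtain the first integral
\begin{equation}
(y_x)^2=\tfrac12 y^4+C_1,
\end{equation}
with $C_1\in\R$ a constant of integration, and then perform a case analysis on the sign of $C_1$. If $C_1=0$, then $y_x=\pm y^2/\sqrt 2$, which separates to give the hyperbolas $(x-C)y=\pm\sqrt 2$ of item (2). If $C_1=-\tfrac12\tilde C_1^2<0$ (renaming $C_1>0$ as in the theorem), then $|y|\ge\sqrt[4]{2C_1}$; picking the branch $y\ge\sqrt[4]{2C_1}$ forces $y_{xx}=y^3>0$, so $y$ is convex and attains a unique minimum exactly where $y_x=0$, i.e.\ at $y=\sqrt[4]{2C_1}$; separating variables and integrating from that minimum gives the implicit expression in item (3), and the reflected branch $y\le-\sqrt[4]{2C_1}$ gives item (4). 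Finally, if $C_1>0$ then $y_x$ never vanishes, $y$ is monotone, and the unique inflection occurs where $y^3=y_{xx}=0$; the two signs of $y_x$ correspond to the increasing/decreasing graphs in items (5) and (6), normalizing so that $y=0$ at $x=C_2$.

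To account for curves that are not graphs over the $x$-axis, I would parametrize as $(x(y),y)$. The same substitution, using $N=(1+x_y^2)^{-1/2}(-1,x_y)^t$ and $k=-(1+x_y^2)^{-3/2}x_{yy}$, converts \eqref{equ-1-b} into
\begin{equation}
x_{yy}=-y^3 x_y^3,
\end{equation}
which admits the trivial solution $x_y\equiv 0$, yielding the vertical lines $x=C$. The line $y=0$ already satisfies $y_{xx}=y^3$ trivially, so it is captured by the graph analysis. Finally one checks that any non-constant solution of $x_{yy}=-y^3 x_y^3$ with $x_y\neq 0$ somewhere produces, by inverting locally, a graph $y=y(x)$ satisfying $y_{xx}=y^3$, so no new curves arise from the vertical parametrization beyond the vertical line.

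The routine steps are the ODE manipulations; the only mildly delicate point is the qualitative description of the curves in each subcase (convexity, unique extremum/inflection, domain of definition), which I would extract directly from the first integral and from the sign of $y_{xx}=y^3$ along the solution. Because the ODE is autonomous, all remaining constants $C_2$ (horizontal translations) appear as the arbitrary base point in the integration, which matches the statement of the theorem.
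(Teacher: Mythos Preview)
Your proof is correct and essentially parallels the paper's argument, with the two graph parametrizations playing swapped roles: the paper works primarily in the form $(x(y),y)$, obtaining $x_{yy}+y^3x_y^3=0$ and integrating $(x_y^{-2})_y=2y^3$ to get $x_y=\pm(\tfrac12 y^4+C)^{-1/2}$, and only uses the form $(x,y(x))$ to pick up the line $y=0$; you instead take $(x,y(x))$ as primary, reduce to $y_{xx}=y^3$, and integrate via the energy $(y_x)^2=\tfrac12 y^4+C_1$, using the vertical parametrization only for the lines $x=C$. The two first integrals are inverse to one another, so the case split on the sign of the constant and the resulting list of curves match exactly.
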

\begin{proof}
First assume that $Y$ is locally of the form $(x(y),y)$, similarly as in the proof of Theorem \ref{thm-1-a}, we have
\begin{equation}
x_{yy}+y^3 x_y^3=0.
\end{equation}
This ODE has a special solution $x\equiv C$. Moreover, when $x_y\neq0$, the ODE can be rewritten as
\begin{equation}
(x_y^{-2})_y=2y^3.
\end{equation}
So,
\begin{equation}
x_y=\pm\frac{1}{\sqrt{\frac{1}2y^4+C}}.
\end{equation}
From this,  we obtain the hyperbolas in (2) when $C=0$,  obtain the expression in (3) and (4) when $C<0$,  and   obtain the expression in (5) and (6) when $C>0$.

On the other hand, when locally written $Y$ in  the form $(x,y(x))$, similarly as in the proof of Theorem \ref{thm-1-a}, we have
\begin{equation}\label{equ-1-b-y_xx}
y_{xx}=y^3.
\end{equation}
Clearly, $y=0$ is a solution of \eqref{equ-1-b-y_xx}.
\end{proof}
\begin{thm}[Expansion in a direction and translation in another direction]\label{thm-1-c} Up to translations and reflection with respect to the $x$-axis, the solutions of \eqref{equ-1-c} are the following five kinds of curves:
\begin{enumerate}
\item The line $y=0$.
\item A curve formed by the graphs of two functions $y=f_1(x)$ and $y=f_2(x)$ with $x\in [0,+\infty)$ where $f_1$ and $f_2$ satisfy the following properties (see Figure \ref{Fig-1-c-1}):
    \begin{enumerate}
    \item $f_1(0)=f_2(0)>0$, $f'_1(0)=+\infty$ and $f'_2(0)=-\infty$;
    \item $f_1(x)>f_2(x)$ for any $x\in (0,+\infty)$;
    \item $f_1$ is increasing with only one inflection point;
    \item $f_2$ is convex function with only one minimum point with positive minimum.
    \end{enumerate}
\item A curve formed by the graphs of two functions $y=f_1(x)$ and $y=f_2(x)$ with $x\in [0,+\infty)$ where $f_1$ and $f_2$ satisfy the following properties (see Figure \ref{Fig-1-c-2}):
    \begin{enumerate}
    \item $f_1(0)=f_2(0)>0$, $f'_1(0)=+\infty$ and $f'_2(0)=-\infty$;
    \item $f_1$ is increasing with only one inflection point;
    \item $f_2$ is a decreasing convex function and $f_2(x)\to 0$ as $x\to +\infty$.
    \end{enumerate}
\item A curve formed by the graphs of two functions $y=f_1(x)$ and $y=f_2(x)$ with $x\in [0,+\infty)$ where $f_1$ and $f_2$ satisfy the following properties (see Figure \ref{Fig-1-c-3}):
    \begin{enumerate}
    \item $f_1(0)=f_2(0)\geq0$, $f'_1(0)=+\infty$ and $f'_2(0)=-\infty$;
    \item $f_1$ is increasing with only one inflection point;
    \item $f_2$ is decreasing with only one inflection point.
    \end{enumerate}
    Moreover, when $f_1(0)=f_2(0)=0$, the curve is symmetric with respect to the $x$-axis. That is, $f_2=-f_1$.
\item The graph of an increasing convex function $f$ on $(-\infty,\infty)$ with $f(x)\to 0$ as $x\to-\infty$. (See Figure \ref{Fig-1-c-4}.)
\end{enumerate}

\end{thm}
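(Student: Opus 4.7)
The plan is to convert \eqref{equ-1-c} to a second-order ODE in each of the two natural graph parametrizations and then run a phase-plane analysis to identify the five listed orbit types.

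Inserting $N$ and $k$ for $Y = (x, y(x))$ into \eqref{equ-1-c} and using $BY + C = (1, y)^t$, the factors of $(1+y_x^2)^{-1/2}$ on both sides cancel exactly as in the proofs of Theorems \ref{thm-1-a} and \ref{thm-1-b}, leaving
\begin{equation*}
y_{xx} = (y - y_x)^3.
\end{equation*}
The companion chart $Y = (x(y), y)$, needed near a vertical tangent, yields
\begin{equation*}
x_{yy} = (1 - y x_y)^3,
\end{equation*}
and the trivial solution $y \equiv 0$ of the first ODE is the line of case (1).

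Next I would pass to the autonomous planar system $y_x = q$, $q_x = (y-q)^3$ in the $(y,q)$-plane and the companion scalar equation $p_y = (1 - yp)^3$ with $p = x_y$. Two structural features drive the classification: the nullclines $q = 0$ (vertical tangent) and $q = y$ (inflection), which partition the plane into four regions with fixed signs of $y_x$ and $q_x$ and therefore fixed monotonicity and convexity of the graph; and the involution $(y, q) \mapsto (-y, -q)$, which is a symmetry of the system and is the source of the reflection in the $x$-axis stated in the theorem. Translation in $x$ is also a symmetry and normalizes the vertical tangent, when one exists, to lie at $x = 0$, so each solution is determined up to these symmetries by a single parameter $y_0 = f_1(0) = f_2(0) \geq 0$ together with a discrete choice of whether a vertical tangent occurs at all.

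The classification then becomes a case analysis on $y_0$: tracking the two branches of the orbit forward and backward through the four sign-regions locates the unique inflection (crossing of the nullcline $q = y$), and the asymptotic behavior of the lower branch is governed by comparison with the separatrix $p = 1/y$ of the companion ODE, which corresponds to the logarithmic asymptote $x \sim \ln y$. Depending on which side of this separatrix the orbit lies, $f_2$ has a positive minimum (Figure \ref{Fig-1-c-1}), decays to $0$ as $x \to +\infty$ (Figure \ref{Fig-1-c-2}), or itself has an inflection and descends to $-\infty$ (Figure \ref{Fig-1-c-3}); Figure \ref{Fig-1-c-4} arises when the orbit admits no vertical tangent at all and is described by a single convex graph on $\R$, and the borderline $y_0 = 0$ forces $f_2 = -f_1$ via the involution, giving the symmetric subcase of (4). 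The main obstacle will be the separatrix comparison and the proof that these five patterns are exhaustive: ruling out further bifurcation values of $y_0$ requires careful monotonicity and shooting arguments along $p_y = (1-yp)^3$ in the spirit of Halldorsson's treatment of $\Aut(\C)$-self-similar solutions of the CSF in \cite{Ha1}, after which the matching of the two branches at the vertical tangent and the remaining qualitative properties follow from standard continuity and ODE uniqueness applied region by region.
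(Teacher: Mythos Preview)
Your setup coincides with the paper's: the ODE $y_{xx}=(y-y_x)^3$, the reflection symmetry $(y,q)\mapsto(-y,-q)$, and the phase-plane study of $u_x=v$, $v_x=(u-v)^3$ (your $(y,q)$). The paper also matches two half-trajectories with the same limiting $u$-value as $v\to\pm\infty$ to build each complete curve, which is exactly your ``parameter $y_0$'' picture. So the strategy is right; what is missing are the two hard ingredients the paper supplies explicitly.

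First, a terminology slip that matters: $p=1/y$ is \emph{not} a separatrix of $p_y=(1-yp)^3$; it is the nullcline $\{p_y=0\}$ (equivalently your inflection line $q=y$), and it is not a solution since the left side is $-1/y^2\neq 0$. The actual separatrices are the trajectories of the $(u,v)$-system that tend to the unique equilibrium at the origin; they are what single out cases~(3) and~(5). The paper proves their existence by a topological shooting argument: it defines maps $\varphi,\psi:(0,\epsilon]\to[0,\epsilon]$ following trajectories from the $u$-axis and from the diagonal $u=v$ up to the horizontal line $v=\epsilon$, observes that both images are open in $[0,\epsilon]$, and concludes that the complement is nonempty, yielding a trajectory that limits to the origin. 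Your plan does not contain this idea, and without it you cannot establish that the borderline cases~(3) and~(5) actually occur.

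Second, you have not addressed completeness: why does $x\to+\infty$ along the branches with $u,v\to+\infty$ (so that $f_1$ is defined on all of $[0,+\infty)$ and $f$ in case~(5) on all of $\R$)? The paper handles this by the substitution $w=u-v$, obtaining the auxiliary system $w_x=v-w^3$, $v_x=w^3$, and showing that every such trajectory is eventually trapped below the curve $w=v^{1/3}$; then $x-x_0=\int v_x^{-1}\,dv=\int w^{-3}\,dv\geq\int v^{-1}\,dv\to+\infty$. This is the precise mechanism behind your heuristic ``$x\sim\ln y$,'' but it does not follow from comparison with the nullcline alone. With these two pieces in place the rest of your outline (tracking sign regions, matching branches at the vertical tangent, exhausting the possibilities by the asymptotic value $y_0$) goes through as in the paper.
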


\begin{figure*}[!htb]
	\centering
	\begin{minipage}{0.45\textwidth}
		\centering
		\includegraphics[width=1\linewidth, height=0.2\textheight]{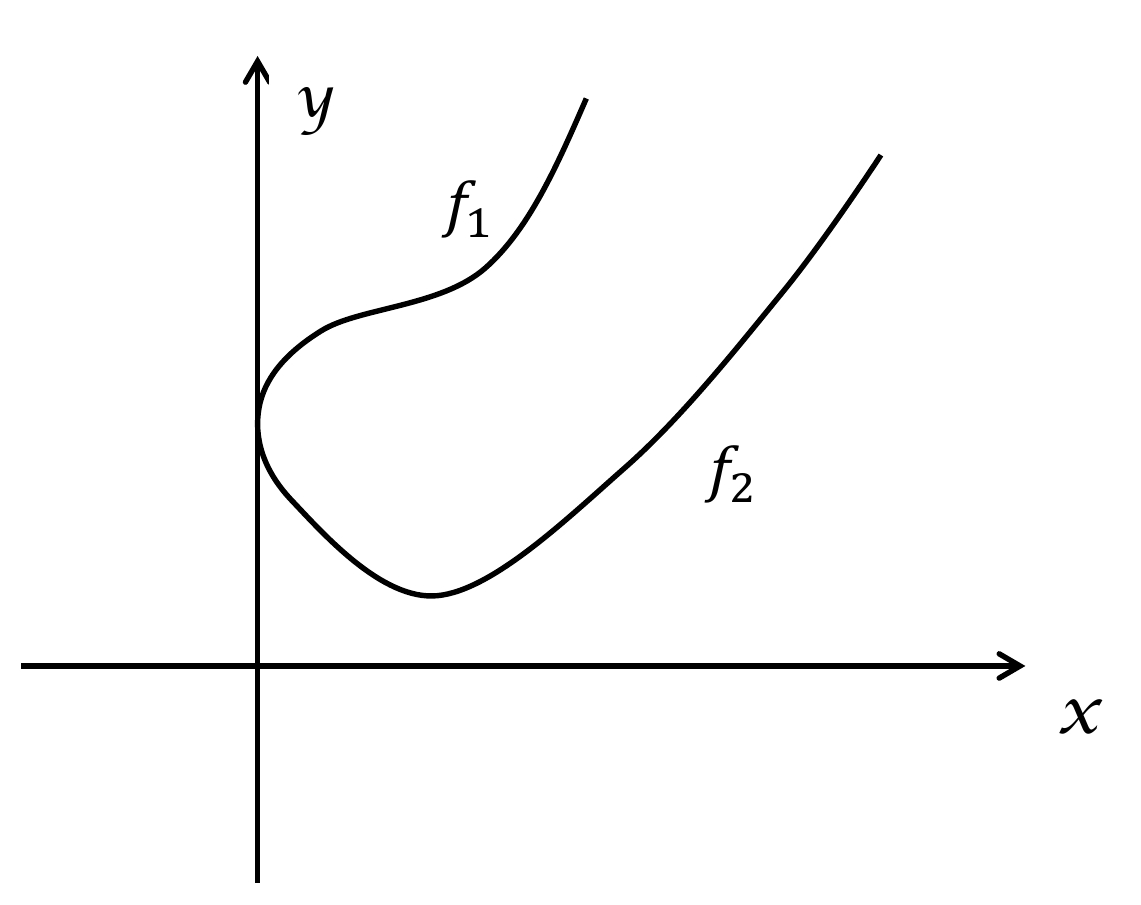}
		\caption{The curve for case (2) of Theorem \ref{thm-1-c}}\label{Fig-1-c-1}			 \end{minipage}%
		\begin{minipage}{0.45\textwidth}
		\centering
		\includegraphics[width=1\linewidth, height=0.2\textheight]{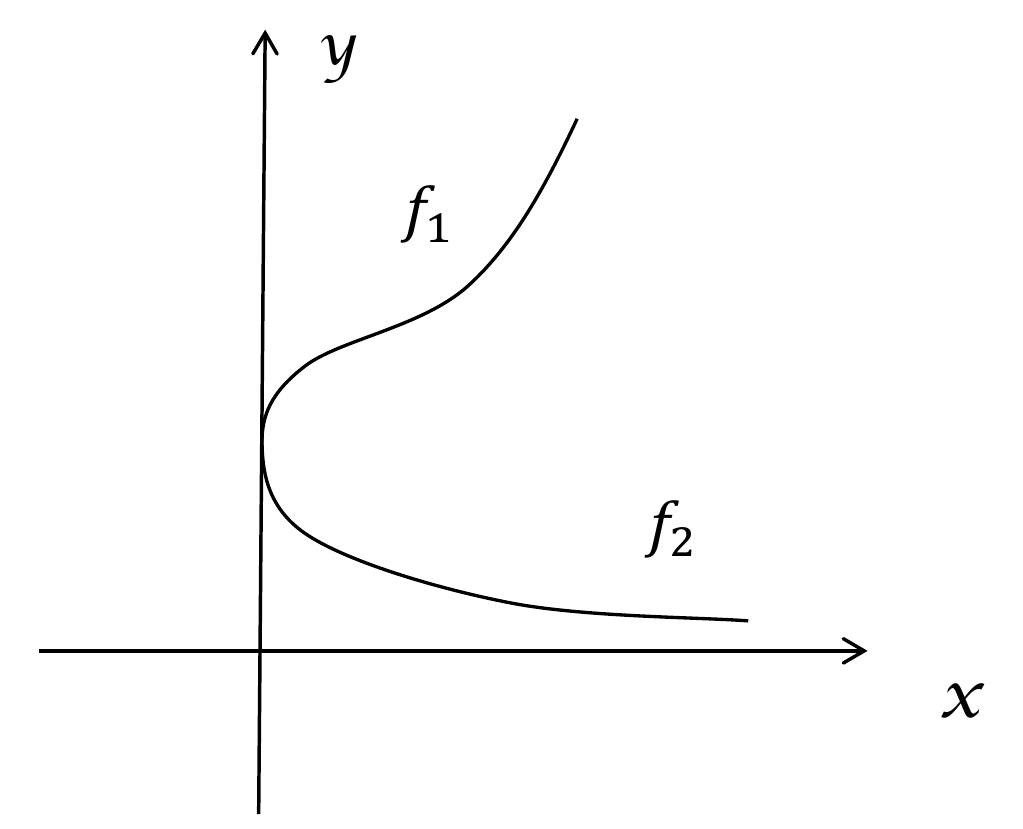}
		\caption{The curve for case (3) of Theorem \ref{thm-1-c}}\label{Fig-1-c-2}
	\end{minipage}
\end{figure*}
\begin{figure*}[!htb]
	\centering
\begin{minipage}{0.45\textwidth}
		\centering
		\includegraphics[width=1\linewidth, height=0.2\textheight]{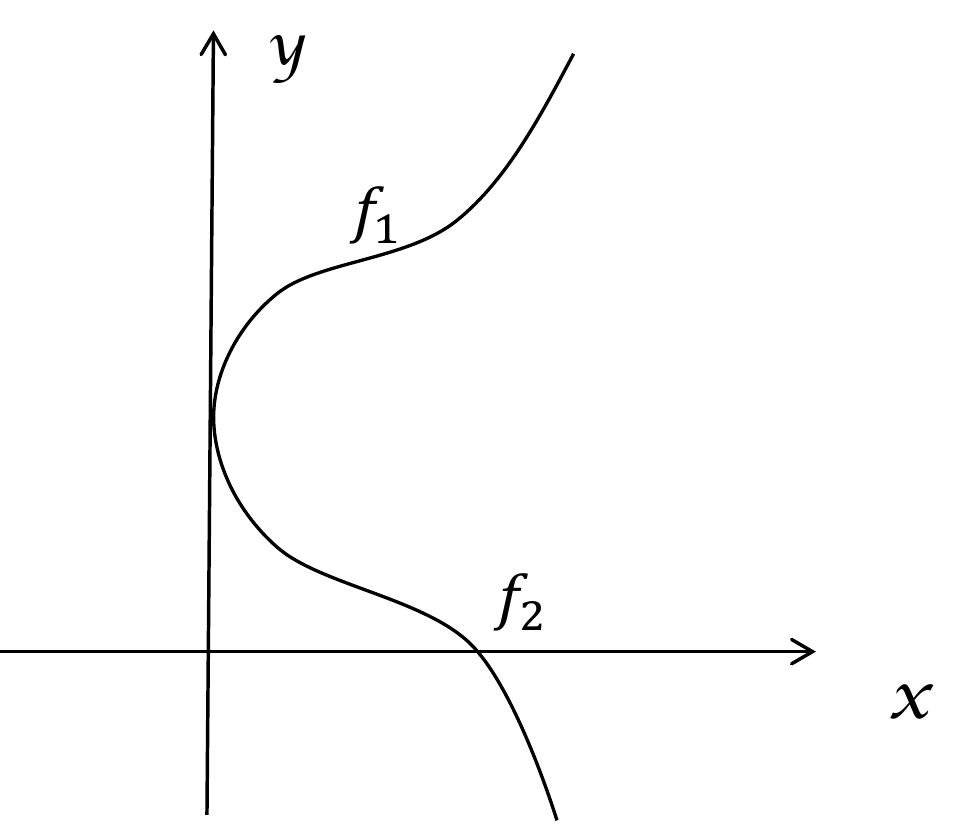}
		\caption{{ The curve for case (4) of Theorem \ref{thm-1-c}}}\label{Fig-1-c-3}
	\end{minipage}
\begin{minipage}{0.45\textwidth}
		\centering
		\includegraphics[width=1\linewidth, height=0.2\textheight]{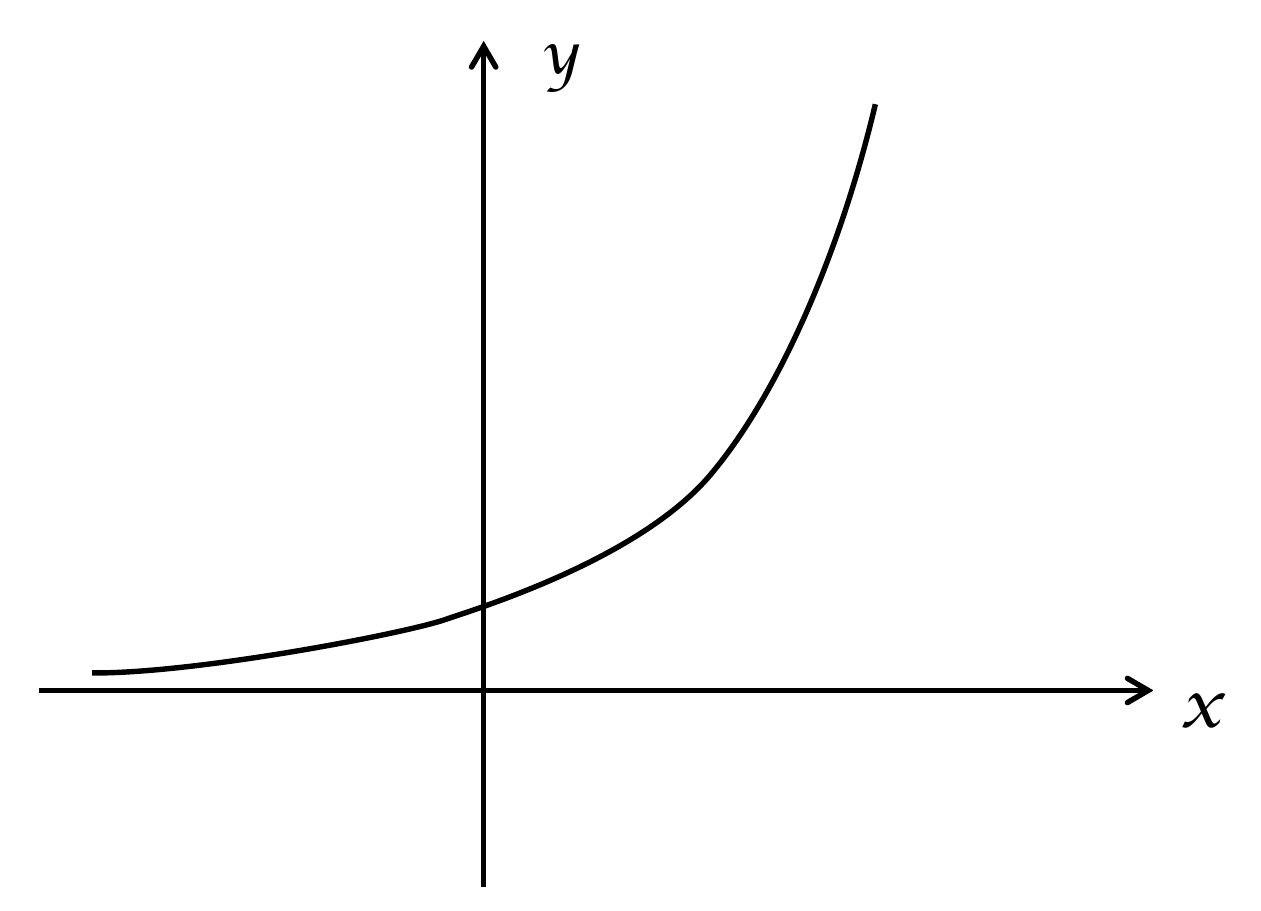}
		\caption{The curve for case (5) of Theorem \ref{thm-1-c}}\label{Fig-1-c-4}
	\end{minipage}	
\end{figure*}
\begin{proof}
Locally written $Y$ in the form $(x,y(x))$, similarly as in the proof of Theorem \ref{thm-1-a}, we have
\begin{equation}\label{equ-1-c-y_xx}
y_{xx}=(y-y_x)^3.
\end{equation}
First, note that if $y=y(x)$ is a solution of the equation, then so is $y=-y(x)$ and moreover $y=0$ is a solution of the equation.

Let $u=y$ and $v=y_x$. Then, by \eqref{equ-1-c-y_xx}, we have the following planar dynamical system:
\begin{equation}\label{equ-1-c-u-v}
\left\{\begin{array}{l}u_x=v\\
v_x=(u-v)^3.
\end{array}\right.
\end{equation}
Because the system is centrosymmetric with respect to the origin, we only need to analyze the trajectories of the dynamical system on the right half plane.

The dynamical system \eqref{equ-1-c-u-v} has a singularity at the origin which corresponding to the line $y=0$.

A trajectory of \eqref{equ-1-c-u-v} must satisfy
\begin{equation}\label{equ-1-c-u-v-2}
\frac{d u}{dv}=\frac{v}{(u-v)^3}.
\end{equation}
For those trajectories $(u(v),v)$ passing through $(u_0,v_0)$ with $u_0-v_0<0$ and $v_0>0$, by \eqref{equ-1-c-u-v-2}, we know that $u(v)$ is decreasing when  $v>v_0$. So
\begin{equation}
u(v)-u_0=\int_{v_0}^v\frac{\xi}{(u(\xi)-\xi)^3}d\xi\geq\int_{v_0}^v\frac{\xi}{(u_0-\xi)^3}d\xi\geq\int_{v_0}^\infty\frac{\xi}{(u_0-\xi)^3}d\xi.
\end{equation}
 This implies that $u(v)$ has a lower bound for $v>v_0$ and hence the trajectory will decrease to a constant as $v\to +\infty$.

For those trajectories $(u,v)$ passing through $(u_0,v_0)$ with $u_0-v_0>0$ and $v_0>0$, $u$ will increase to $+\infty$. Otherwise the trajectory will stay in the triangle region $\{(u,v)\ |\ C\geq u\geq v\geq 0\}$ with some constant $C$ and this is impossible. Moreover, $v$ will also increase to $+\infty$. Otherwise, suppose that $v\leq C$ with $C$ the supremum of $v$. Then, one has $$\liminf_{u\to+\infty}\left|\frac{dv}{du}\right|= 0.$$
 However
\begin{equation}
\frac{dv}{du}=\frac{(u-v)^3}{v}\to+\infty
\end{equation}
as $u\to +\infty$ since $v\to C$ as assumed before.

Furthermore, for those trajectories $(u,v)$ passing through $(u_0,v_0)$ with $u_0-v_0>0$ and $v_0>0$, we want to show that as $u\to +\infty$, $x\to+\infty$. Let $w=u-v$. Then,
\begin{equation}
w_x=v-w^3
\end{equation}
Consider the dynamical system:
\begin{equation}\label{equ-1-c-w-v}
\left\{\begin{array}{l}w_x=v-w^3\\
v_x=w^3.
\end{array}\right.
\end{equation}
It is clear that for those trajectories passing through $(w_0,v_0)$ with $v_0>0$ and $w_0\geq v_0^\frac{1}{3}$, the trajectory will decrease to pass though the curve $w=v^\frac13$, stay below it and become increasing in $w$ (see Figure \ref{Fig-1-c-w-v}). For those trajectories passing through $(w_0,v_0)$ with $v_0^\frac13>w_0>0$, the trajectory will be increasing in $w$ and stay below the curve $w=v^\frac13$. In summary, for sufficiently large $v$, for instance $v>C$, one has $0<w<v^\frac{1}3$. Now, suppose that $u(x_0)=u_0$ and $v(x_0)=v_0$. Then, by \eqref{equ-1-c-w-v},
\begin{equation}
x-x_0=\int_{v_0}^vw^{-3}(\xi)d\xi\geq \int_{C}^{v}\xi^{-1}d\xi=\ln v-\ln C.
\end{equation}
So, as $v\to +\infty$, $x\to +\infty$.

\begin{figure*}[!htb]
	\centering
\begin{minipage}{0.45\textwidth}
		\centering
		\includegraphics[width=1\linewidth, height=0.3\textheight]{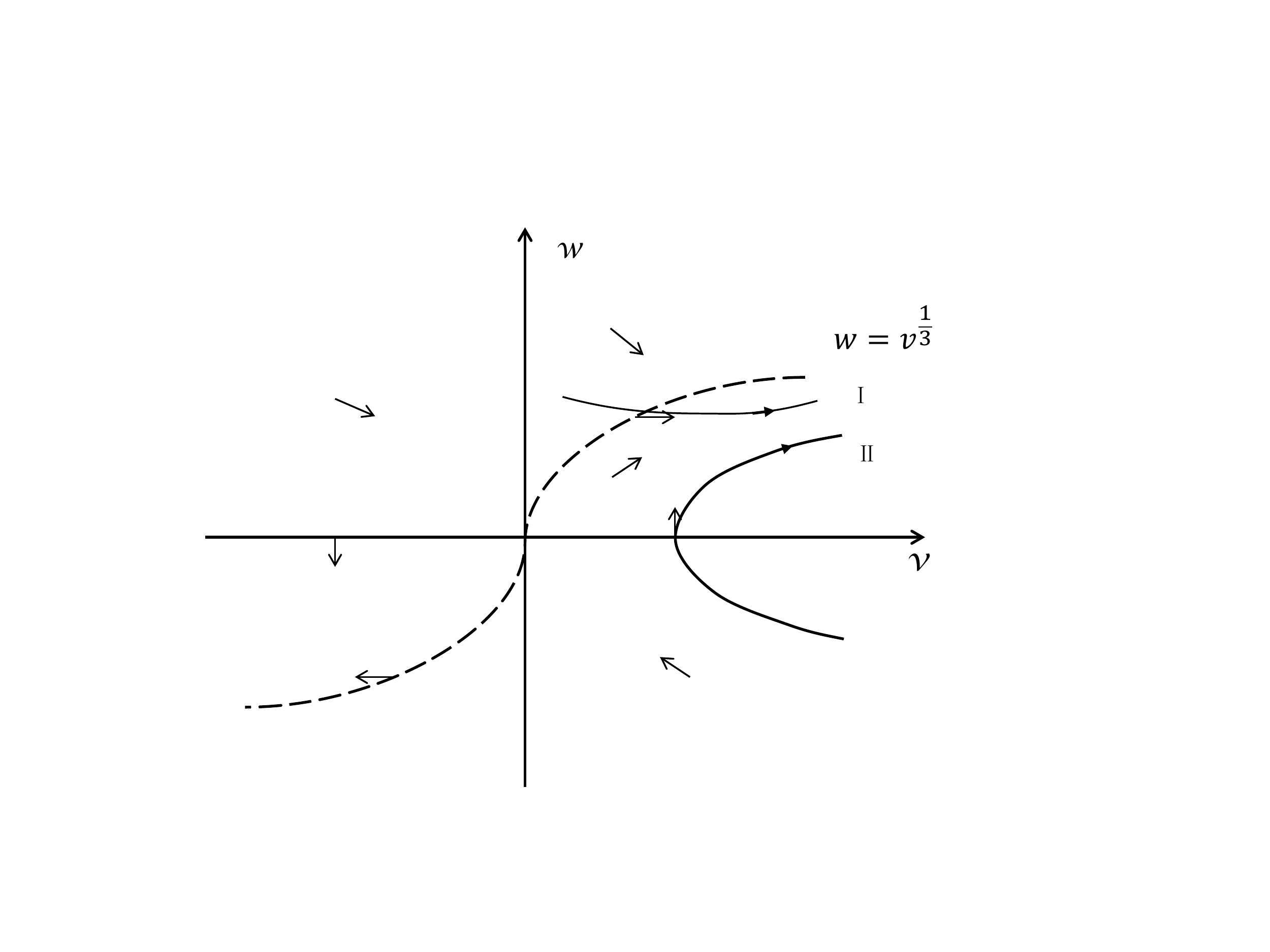}
		\caption{Dynamics of $w-v$ in the proof of Theorem \ref{thm-1-c}}\label{Fig-1-c-w-v}
	\end{minipage}
\begin{minipage}{0.45\textwidth}
		\centering
		\includegraphics[width=1\linewidth, height=0.3\textheight]{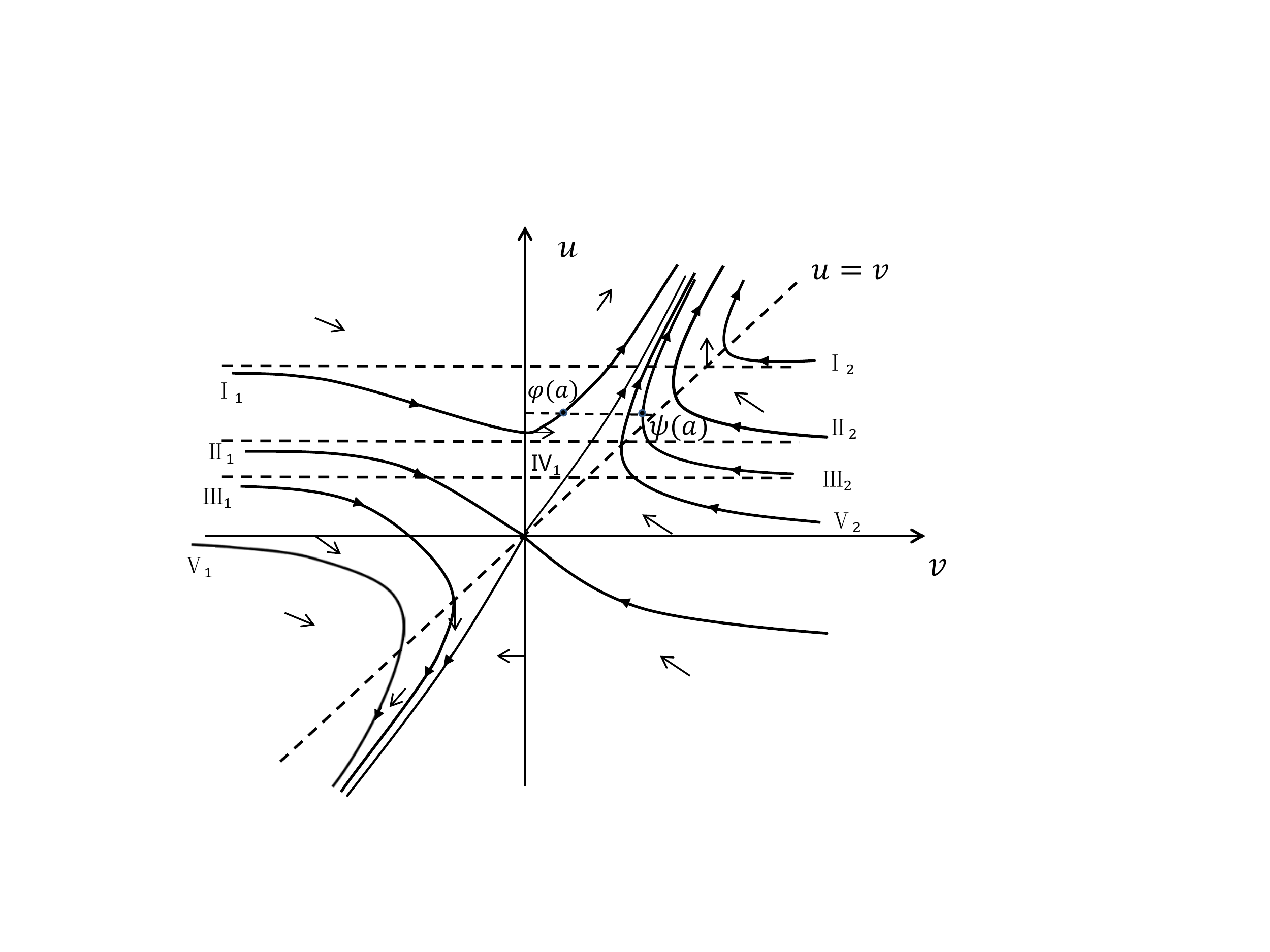}
		\caption{Dynamics of $u-v$ in the proof of Theorem \ref{thm-1-c}}\label{Fig-1-c-u-v}
	\end{minipage}	
\end{figure*}


Finally, we show that there are trajectories of \eqref{equ-1-c-u-v} in each connected component of $\R^2$ with the lines $v=0$ and $u-v=0$ deleted that tend to the origin as $x\to -\infty$ or $x\to +\infty$. Consider the map $\varphi$ and $\psi$ from $(0,\epsilon]$ to $[0,\epsilon]$ by sending $a$ to $b=\varphi(a)$ such that $(b,\epsilon)$ is on  the trajectory starting at $(a,0)$ and by sending $a$ to $b=\psi(a)$ such that $(b,\e)$ is on the trajectory starting at $(a,a)$  respectively (see Figure \ref{Fig-1-c-u-v}). It is clear that $\varphi$ and $\psi$ are both topological embedding and $\phi((0,\e])$ and $\psi((0,\e])$ are both open in $[0,\e]$. Therefore $[0,\e]\setminus(\phi((0,\e])\cup\psi((0,\e]))$ is not empty and the trajectory passing through $(\e,b)$ with $b\in[0,\e]\setminus(\phi((0,\e])\cup\psi((0,\e]))$ will tend to the origin as $x\to -\infty$. This proves the claim in the first connected component. The the proof of the claim in other connected components is similar.


By the analysis of trajectories of \eqref{equ-1-c-u-v} above, the phase diagram of the dynamical system is as in Figure \ref{Fig-1-c-u-v}. Note that the two trajectories with the same asymptotic behavior as $v\to +\infty$ and $v\to -\infty$ respectively will math up to form a complete curve. So,  the trajectories $I_1$ and $I_2$ in Figure \ref{Fig-1-c-u-v} match up to form the curve of (2), the trajectories $II_1$ and $II_2$ in Figure \ref{Fig-1-c-u-v} match up to form the curve of (3), the trajectories $III_1$ and $III_2$ in Figure \ref{Fig-1-c-u-v} match up to form the curve of (4), and the trajectory $IV$ corresponds to the curve of (5).
\end{proof}
\begin{thm}[Contraction in a single direction]\label{thm-1-d}Solutions of the equation \eqref{equ-1-d} are the following curves:
\begin{enumerate}
\item The lines $x=C$ and $y=0$.
\item The graph of a $T$-periodic function with the function in a periodicity given by
    \begin{equation}
    x-C_2=\left\{\begin{array}{ll}\int_0^{y}\frac{1}{\sqrt{C_1-\frac{1}{2}\xi^4}}d\xi& -\sqrt[4]{2C_1}\leq y\leq \sqrt[4]{2C_1}\\
    \frac{T}{2}-\int_0^y\frac{1}{\sqrt{C_1-\frac{1}{2}\xi^4}}d\xi& 0\leq y\leq \sqrt[4]{2C_1}\\
    -\frac{T}2-\int_0^y\frac{1}{\sqrt{C_1-\frac{1}{2}\xi^4}}d\xi&-\sqrt[4]{2C_1}\leq y\leq 0\end{array}\right.
    \end{equation}
where $T=4\int_0^{\sqrt[4]{2C_1}}\frac{1}{\sqrt{C_1-\frac{1}{2}\xi^4}}d\xi$ and $C_1$ is a positive constant.
\end{enumerate}
\end{thm}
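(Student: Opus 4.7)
My plan mirrors the proof of Theorem \ref{thm-1-b}, exploiting the fact that changing the sign of $B$ from $\mathrm{diag}(0,1)$ to $\mathrm{diag}(0,-1)$ flips the sign on the right-hand side of the resulting ODE, converting a blow-up equation into an oscillator equation.

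First, I would write $Y$ locally in the form $(x(y),y)$ and compute $N_Y$ and $k_Y$ exactly as in the proof of Theorem \ref{thm-1-a}. Substituting these into \eqref{equ-1-d} and cubing yields
\begin{equation*}
x_{yy}=y^{3}x_{y}^{3},
\end{equation*}
which admits the trivial solutions $x_{y}\equiv 0$, i.e.\ the vertical lines $x=C$.

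Next, I would write $Y$ in the form $(x,y(x))$. The same substitution reduces \eqref{equ-1-d} to the autonomous second-order ODE
\begin{equation*}
y_{xx}=-y^{3},
\end{equation*}
which has the obvious constant solution $y\equiv 0$, giving the line in case (1). For nontrivial solutions, multiply by $y_{x}$ and integrate to obtain the conserved ``energy''
\begin{equation*}
\tfrac{1}{2}y_{x}^{2}+\tfrac{1}{4}y^{4}=\tfrac{1}{2}C_{1},
\end{equation*}
so that $y_{x}=\pm\sqrt{C_{1}-\tfrac{1}{2}y^{4}}$. Since the left side is real, we must have $C_{1}\geq 0$ and $|y|\leq \sqrt[4]{2C_{1}}$; the case $C_{1}=0$ only recovers the trivial line $y=0$, so we take $C_{1}>0$.

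For $C_{1}>0$ the motion is confined to $[-\sqrt[4]{2C_{1}},\sqrt[4]{2C_{1}}]$ with $y_{x}$ vanishing only at the endpoints, so $y(x)$ oscillates between these two extremes. I would integrate $dx=\pm dy/\sqrt{C_{1}-\tfrac{1}{2}y^{4}}$, choosing the origin so that $y(C_{2})=0$; the positive branch gives the expression on $-\sqrt[4]{2C_{1}}\leq y\leq\sqrt[4]{2C_{1}}$ in the first piece of the piecewise formula, and then the monotone branches from the turning points back down provide the second and third pieces. The quarter-period is exactly $\int_{0}^{\sqrt[4]{2C_{1}}}(C_{1}-\tfrac{1}{2}\xi^{4})^{-1/2}d\xi$, so the full period along $x$ is $T=4\int_{0}^{\sqrt[4]{2C_{1}}}(C_{1}-\tfrac{1}{2}\xi^{4})^{-1/2}d\xi$ as claimed, and by reflection and translation symmetry of the ODE the solution extends to a $T$-periodic function on all of $\mathbb{R}$.

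I do not expect a serious obstacle: the convergence of the improper integral at $y=\pm\sqrt[4]{2C_{1}}$ is classical (the integrand behaves like $(\sqrt[4]{2C_{1}}-|y|)^{-1/2}$), so $T$ is finite, and smoothness at the turning points follows because at those points the curve has horizontal tangent for the $(x,y(x))$ parametrization or, equivalently, vertical tangent for the $(x(y),y)$ parametrization, and the equation $x_{yy}=y^{3}x_{y}^{3}$ is compatible with $x_{y}=0$ there. The only minor care needed is to stitch together the two coordinate descriptions at the turning points to confirm that no other solutions arise; this is automatic because the conserved quantity controls the trajectory globally.
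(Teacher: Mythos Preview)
Your proposal is correct and follows essentially the same approach as the paper. The only cosmetic difference is that the paper carries out the main integration in the $(x(y),y)$ chart via $(x_y^{-2})_y=-2y^3$, whereas you do the equivalent computation in the $(x,y(x))$ chart using the conserved energy $\tfrac12 y_x^2+\tfrac14 y^4$; since $x_y=1/y_x$ these yield the same relation $x_y^{-2}=C_1-\tfrac12 y^4$, and your discussion of the period and the matching at the turning points is in fact more detailed than the paper's one-line ``From this, we obtain (2).''
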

\begin{proof}
Locally written the curve $Y$ in the form of $(x(y),y)$. Then, similarly as in the proof of Theorem \ref{thm-1-a}, we have
\begin{equation}\label{equ-1-b-x_yy}
x_{yy}=y^3x_y^3.
\end{equation}
First of all, $x_y=0$ satisfies the equation. So $x=C$ is a solution of \eqref{equ-1-d}. When $x_y\neq 0$, we have
\begin{equation}
(x_y^{-2})_y=-2y^3.
\end{equation}
So
\begin{equation}
x_y^{-2}=C_1-\frac{1}{2}y^4
\end{equation}
where $C_1$ is a positive constant. Then
\begin{equation}
x(y)=\pm\int_{0}^y\frac{1}{\sqrt{C_1-\frac12\xi^4}}d\xi+C_2
\end{equation}
with $y\in [-\sqrt[4]{2C_1},\sqrt[4]{2C_1}]$. From this, we obtain (2).

On the other hand, when locally written $Y$ in the form of $(x,y(x))$, similarly as in the proof of Theorem \ref{thm-1-a}, we have
\begin{equation}
y_{xx}=-y^3.
\end{equation}
So $y=0$ is a solution of \eqref{equ-1-d}.
\end{proof}
\begin{thm}[Contraction in a direction and translation in another direction]\label{thm-1-e} Up to translations and reflection with respect to the $x$-axis, solutions of \eqref{equ-1-e} are the following curves:
\begin{enumerate}
\item The line $y=0$;
\item Curves formed by the graphs of the functions $f_1$ and $-f_2$. Here $f_1$ and $f_2$ are functions defined on $[0,+\infty)$ oscillating around the $x$-axis satisfying the following properties:

    \begin{enumerate}
    \item $f_1(0)=f_2(0)\geq 0$ and $f_1'(0)=f_2'(0)=+\infty$;
    \item There are three sequences $\{x_n^{(i)}\}_{n=1}^\infty,\{y_n^{(i)}\}_{n=1}^\infty$ and $\{z_n^{(i)}\}_{n=1}^\infty$ in $(0,\infty)$ increasing to $+\infty$ as $n\to +\infty$ for $f_i$ with $i=1,2$ such that
    \begin{enumerate}
    \item $x_n^{(i)}<y_n^{(i)}<z_n^{(i)}<x_{n+1}^{(i)}$ for any $n$ and $i=1,2$;
    \item $x_n^{(i)}$ is a local maximum point of $f_i$ with $f_i(x_n)>0$ when $n$ is odd, and is a local minimum point of $f_i$ with $f_i(x_n)<0$ when $n$ is even;
    \item $y_1^{(i)},y_2^{(i)},\cdots$ are all the inflection points of $f_i$;
    \item $z_1^{(i)},z_2^{(i)},\cdots$ are all the positive zeroes of $f_i$;
    \item $f_i$ is monotone between $x_n^{(i)}$ and $x_{n+1}^{(i)}$;
    \item $\{|f_i(x_n)|\}_{n=1}^\infty$, $\{|f_i(y_n)|=|f'_i(y_n)|\}_{n=1}^\infty$ and $\{|f'_{i}(z_n)|\}_{n=1}^\infty$ decrease to $0$ as $n\to\infty$;
    \item $y^{(i)}_{n}-x^{(i)}_{n}<x^{(i)}_{n+1}-y^{(i)}_n$ and $x^{(i)}_{n+1}-z_{n}^{(i)}<z_{n+1}^{(i)}-x_{n+1}^{(i)}$ for any $n$. Moreover,  $x^{(i)}_{n+1}-y^{(i)}_n$ and $z_{n+1}^{(i)}-x_{n+1}^{(i)}$ tend to $\infty$ as $n\to\infty$.
        \end{enumerate}
    \end{enumerate}
    Moreover, when $f_1(0)=f_2(0)=0$, the curve is symmetric with respect to the $x$-axis (see Figure \ref{Fig-1-e-1}). That is $f_1=f_2$.
\item Graph of a function $f$ on $(-\infty,+\infty)$ oscillating around the $x$-axis (See Figure \ref{Fig-1-e-2}.). More precisely, there are three sequences $\{x_n\}_{n=1}^\infty,\{y_n\}_{n=1}^\infty$ and $\{z_n\}_{n=1}^\infty$ in $(0,\infty)$ increasing to $+\infty$ as $n\to +\infty$ such that
    \begin{enumerate}
    \item $f$ decreases to $-\infty$ as $x\to -\infty$ on $(-\infty,x_1]$.
    \item $x_n<y_n<z_n<x_{n+1}$ for any $n$;
    \item $x_n$ is a local maximum point of $f$ with $f(x_n)>0$ when $n$ is odd, and is a local minimum point of $f$ with $f(x_n)<0$ when $n$ is even;
    \item $y_1,y_2,\cdots$ are all the inflection points of $f$;
    \item $z_1,z_2,\cdots$ are all the zeroes of $f$ greater than $x_1$;
    \item $f$ is monotone between $x_n$ and $x_{n+1}$;
    \item $\{|f(x_n)|\}_{n=1}^\infty$, $\{|f(y_n)|=|f'(y_n)|\}_{n=1}^\infty$ and $\{|f'(z_n)|\}_{n=1}^\infty$ decrease to $0$ as $n\to\infty$;
    \item $y_{n}-x_{n}<x_{n+1}-y_n$ and $x_{n+1}-z_{n}<z_{n+1}-x_{n+1}$ for any $n$. Moreover, $x_{n+1}-y_n$ and $z_{n+1}-x_{n+1}$ tend to $+\infty$ as $n\to\infty$.
        \end{enumerate}

\end{enumerate}
\end{thm}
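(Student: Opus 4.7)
The plan is to mimic the phase-plane analysis used for Theorem \ref{thm-1-c}. Writing $Y=(x,y(x))$ locally and computing $N$ and $k$ as in the earlier proofs, equation \eqref{equ-1-e} collapses to the scalar ODE
\begin{equation}
y_{xx}=-(y+y_x)^{3},
\end{equation}
whose trivial solution $y\equiv 0$ gives case (1). Setting $u=y$ and $v=y_x$ converts it into the planar autonomous system
\begin{equation}
u_x=v,\qquad v_x=-(u+v)^{3},
\end{equation}
whose only equilibrium is the origin and which is invariant under the involution $(u,v)\mapsto(-u,-v)$; this reflection corresponds to $y\mapsto-y$ and is precisely what pairs the graphs of $f_1$ and $-f_2$ in case (2), forcing $f_1=f_2$ when $f_1(0)=f_2(0)=0$.

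Next I would introduce the Lyapunov function $E(u,v)=\frac12 v^{2}+\frac14(u+v)^{4}$, whose derivative along the system is
\begin{equation}
\frac{dE}{dx}=v\,v_x+(u+v)^{3}(u_x+v_x)=-(u+v)^{6}\le 0,
\end{equation}
vanishing only on the line $u+v=0$. Since that line carries no non-trivial trajectory, LaSalle's invariance principle yields $(u(x),v(x))\to(0,0)$ as $x\to+\infty$. The nullclines $v=0$ and $u+v=0$ cut $\R^{2}$ into four sectors on which the signs of $(u_x,v_x)$ force clockwise rotation about the origin, so every non-trivial trajectory crosses, in cyclic order, $v=0$, then $u+v=0$, then $u=0$, then $v=0$ again. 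On the graph $y=y(x)$, the crossings of $v=0$ are local extrema with alternating signs, the crossings of $u+v=0$ are inflection points obeying $|y(y_n)|=|y_x(y_n)|$, and the crossings of $u=0$ are zeros; this produces the ordering $x_n<y_n<z_n<x_{n+1}$ and the sign rules claimed in the theorem.

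The quantitative conclusions then all follow from the Lyapunov decrease. At $x_n$, $y_n$, $z_n$ respectively one has $E=\frac14 y(x_n)^{4}$, $E=\frac12 y(y_n)^{2}$, $E=\frac12 y_x(z_n)^{2}+\frac14 y_x(z_n)^{4}$, so the strict monotone decay $E\searrow 0$ forces $|f(x_n)|,\ |f(y_n)|=|f'(y_n)|,\ |f'(z_n)|\to 0$. The interval inequalities $y_n-x_n<x_{n+1}-y_n$ and $x_{n+1}-z_n<z_{n+1}-x_{n+1}$ would follow by writing each time as $\int dv/(-(u+v)^{3})$ over the corresponding arc parametrized by $v$ and comparing the integrands: at any common value of $v$, $(u+v)^{4}=4(E-\frac12 v^{2})$ is strictly smaller on the later arc (because $E$ is strictly smaller there), so $|v_x|$ is smaller and the time is larger. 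The divergences $x_{n+1}-y_n\to+\infty$ and $z_{n+1}-x_{n+1}\to+\infty$ come from the cubic vanishing of the vector field at the origin: the time to traverse a sector scales like $(\text{amplitude})^{-2}$, which blows up as $E\searrow 0$.

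Finally, complete curves are classified by studying backward dynamics. Since $E$ is non-decreasing under $x\mapsto-x$, each non-trivial trajectory either (a) persists for all $x\le 0$ with $v$ bounded, in which case $u\to\pm\infty$ monotonically and one reads off the graph of case (3); or (b) $v(x)$ escapes to $\pm\infty$ at some finite $x_0<0$, forcing a vertical tangent of the curve at $x=x_0$, after which it must be reparametrized as $(x(y),y)$ and glued to the $(-u,-v)$-symmetric trajectory to produce the paired graphs $(f_1,-f_2)$ of case (2), the symmetric sub-case $f_1=f_2$ occurring precisely when the junction sits on the $x$-axis. The main obstacle will be alternative (b): establishing genuine finite-$x$ blow-up of $v$ in backward time requires a sub-system analysis in $(w,v)=(u+v,v)$, with $w_x=v-w^{3}$ and $v_x=-w^{3}$, analogous to that of Figure \ref{Fig-1-c-w-v}, after which the matching across the vertical tangent must be verified to produce all of the regularity and monotonicity properties listed in the theorem.
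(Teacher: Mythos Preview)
Your Lyapunov function $E=\tfrac12 v^{2}+\tfrac14(u+v)^{4}$ with $E'=-(u+v)^{6}$ is a genuinely different and cleaner choice than the paper's $F=u^{4}+2v^{2}$ (which gives $F'=-4v^{2}(3u^{2}+3uv+v^{2})$), and it handles the forward spiralling and the first gap inequality $y_n-x_n<x_{n+1}-y_n$ more directly than the paper does: on each of those two arcs $v$ is monotone over the same interval $[v(y_n),0]$, and $(u+v)^{4}=4E-2v^{2}$ lets you compare $|v_x|$ pointwise in $v$. But your sketch for the second inequality $x_{n+1}-z_n<z_{n+1}-x_{n+1}$ breaks: on $[x_{n+1},z_{n+1}]$ the variable $v$ is \emph{not} monotone (it rises to $v(y_{n+1})$ and then falls), so you cannot parametrise that arc by $v$. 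The paper parametrises both arcs by $u$, which \emph{is} monotone over $[u(x_{n+1}),0]$ on each, and then one needs $v^{2}$ as a function of $u$ alone; this is exactly what $F=u^{4}+2v^{2}$ delivers via $2v^{2}=F-u^{4}$, whereas your $E$ entangles $v$ with $(u+v)^{4}$ and gives no such clean comparison.

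The more serious gap is your backward dichotomy. You assert that either (a) the solution persists for all $x\le 0$ with $v$ bounded and $u\to\pm\infty$ monotonically, or (b) $v$ blows up at a finite $x_0$. In fact case~(3) of the theorem is precisely the situation where $v\to+\infty$ \emph{and} $u\to-\infty$ simultaneously while $x\to-\infty$: the solution is global in backward time, yet $v$ is unbounded. The paper distinguishes cases~(2) and~(3) not by boundedness of $v$ but of $u$: after finitely many backward spiral turns every trajectory enters and remains in $\{v>0,\ u+v>0\}$, where $u$ is monotone decreasing (in backward time) to a limit $\psi(u_0)\in[-\infty,\infty)$; finite $\psi(u_0)$ gives the vertical tangent and the two matched graphs of case~(2), while $\psi(u_0)=-\infty$ gives case~(3). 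The delicate point, which your outline does not address, is proving that $\psi(u_0)=-\infty$ forces $x\to-\infty$ rather than blow-up; the paper obtains this from the barrier $u+v\le 2v^{1/3}$ for large $v$ via the auxiliary $(w,v)$-system with $w=u+v$, so that $\int dv/(u+v)^{3}\ge \tfrac18\int dv/v$ diverges. Your final paragraph gestures at this sub-system but assigns it to the wrong alternative.
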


\begin{figure*}[!htb]
	\centering
	\begin{minipage}{0.45\textwidth}
		\centering
		\includegraphics[width=1.0\linewidth, height=0.2\textheight]{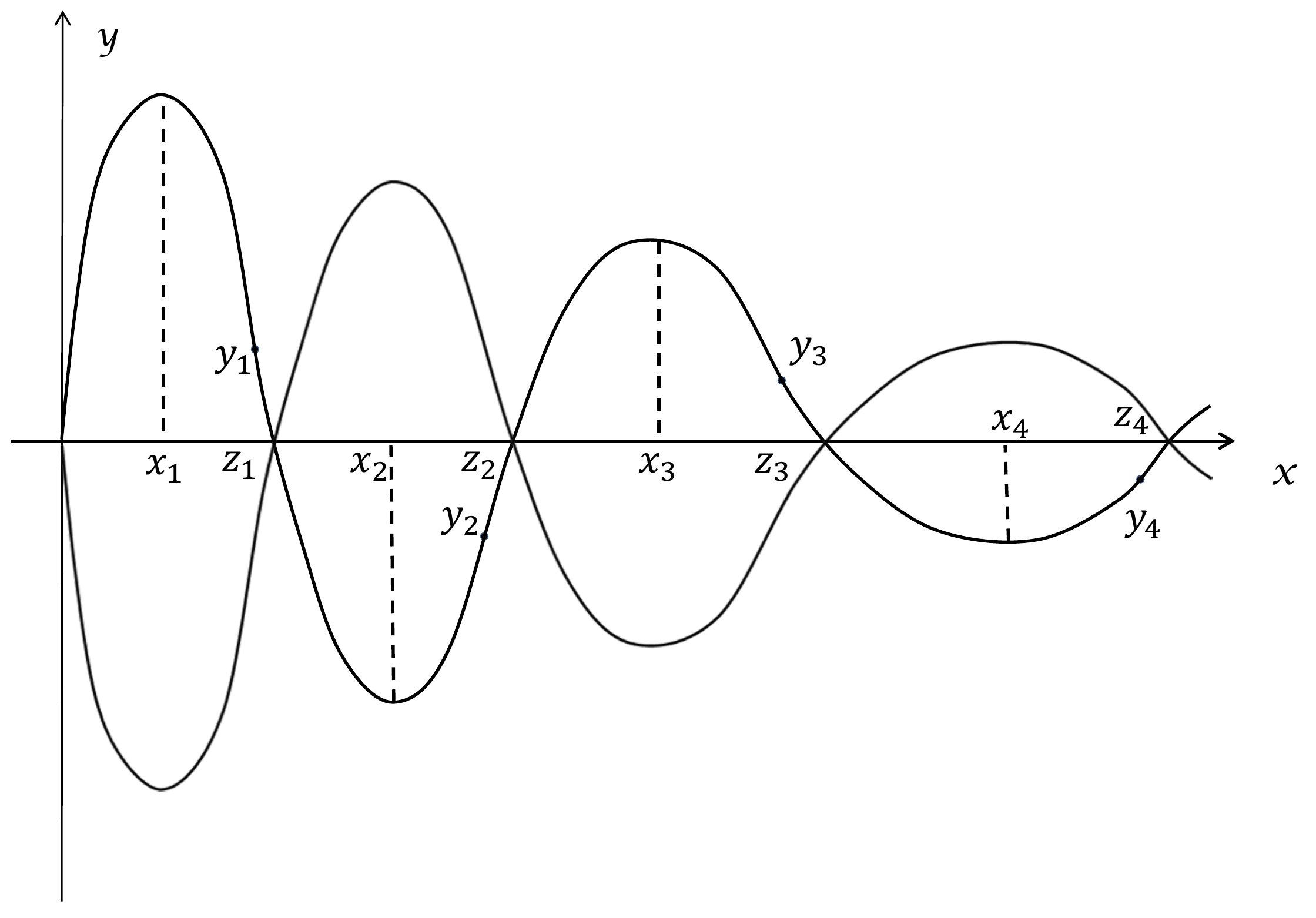}
		\caption{The symmetric curve of (2) in Theorem \ref{thm-1-e}.}\label{Fig-1-e-1}		
	\end{minipage}%
		\begin{minipage}{0.45\textwidth}
		\centering
		\includegraphics[width=1.0\linewidth, height=0.2\textheight]{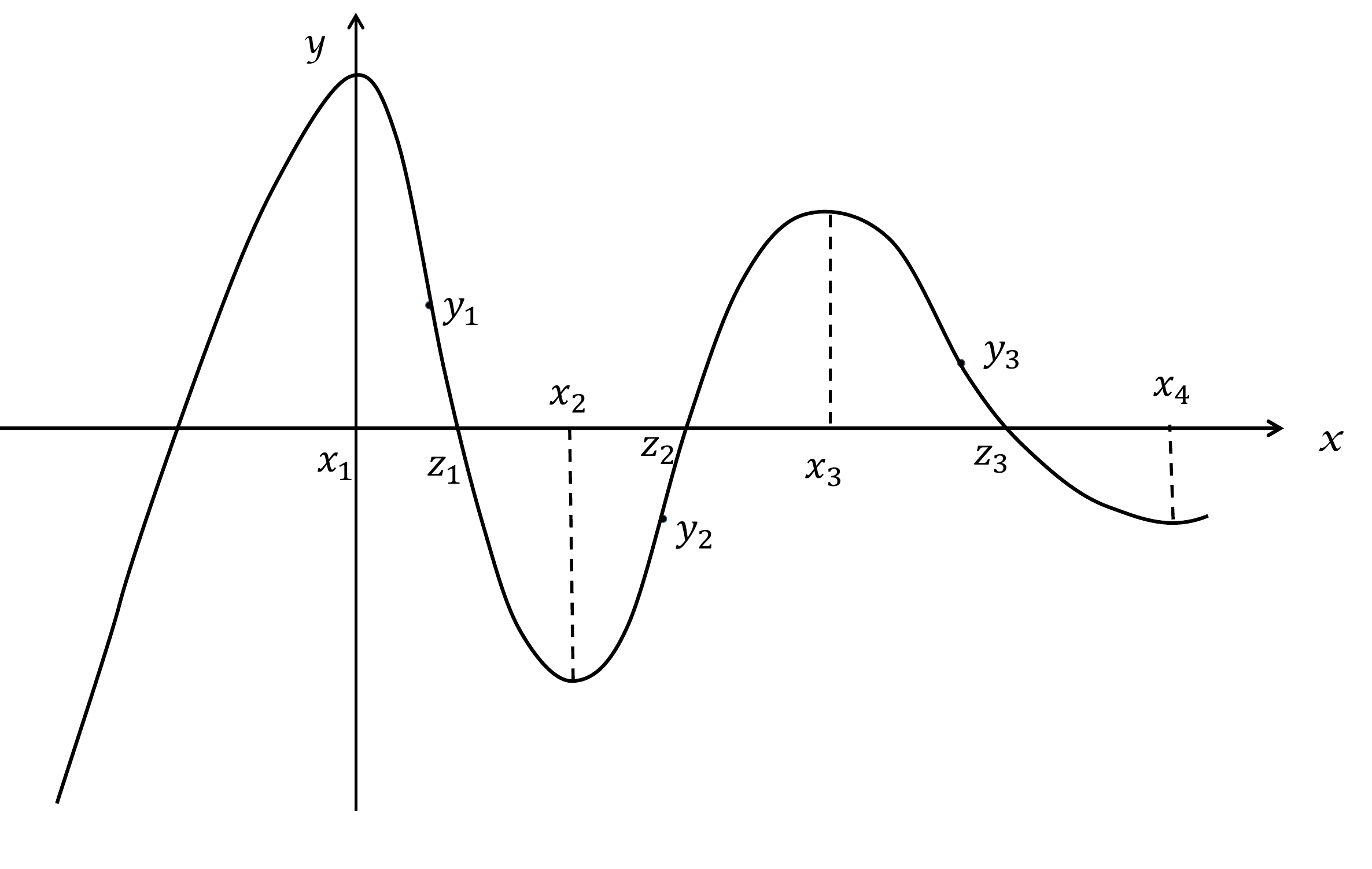}
		\caption{The curve of (3) in Theorem \ref{thm-1-e}.}\label{Fig-1-e-2}
	\end{minipage}
	\end{figure*}
\begin{proof} Locally written $Y$ in the form of $(x,y(x))$. Then, similarly as in the proof of Theorem \ref{thm-1-a}, we have
\begin{equation}
y_{xx}=-(y+y_x)^3.
\end{equation}
It is clear that if $y$ is a solution of the equation, then so is $-y$.

Let $u=y$ and $v=y_x$. Then, we have the following planar dynamical system:
\begin{equation}\label{equ-1-e-u-v}
\left\{\begin{array}{l}u_x=v\\v_x=-(u+v)^3.
\end{array}\right.
\end{equation}
The system has a singularity at the origin which corresponds to the solution $y=0$. For convenience of presentation, we will divide the proof into several claims.

\noindent {\bf Claim 1.} As $x$ increasing, a trajectory of \eqref{equ-1-e-u-v} will spiral to the origin. This also implies that $x$ must go to $+\infty$ along any trajectory.

\noindent{\bf Proof of Claim 1.}
Note that, along a trajectory of the dynamical system,
\begin{equation}\label{equ-1-e-decreasing}
\frac{d}{dx}(u^4+2v^2)=-4v^2(u^2+u(u+v)+(u+v)^2)< 0
\end{equation}
where $v\neq 0$. So, as $x$ increasing, a trajectory will tend to the origin. This implies that $x$ must go to $+\infty$ along any trajectory.

Next, we show that any trajectory of \eqref{equ-1-e-u-v} will spiral to the origin as $x\to+\infty$. Suppose this is not true. Without loss of generality, assume that there is a trajectory tending to the origin in the region $R=\{(u,v)\ |\ v<0\ \mbox{and}\ u+v>0\}$. Consider the line $v=-\epsilon$ with $\epsilon$ a small positive number. Then, it is clear that any trajectory can only passing through the line from the RHS to the LHS in the region $R$ (see Figure \ref{Fig-1-e-u-v}). This implies that a trajectory of \eqref{equ-1-e-u-v} can not tend to the origin in the region $R$.

\begin{figure}
  \includegraphics[width=0.8\linewidth, height=0.3\textheight]{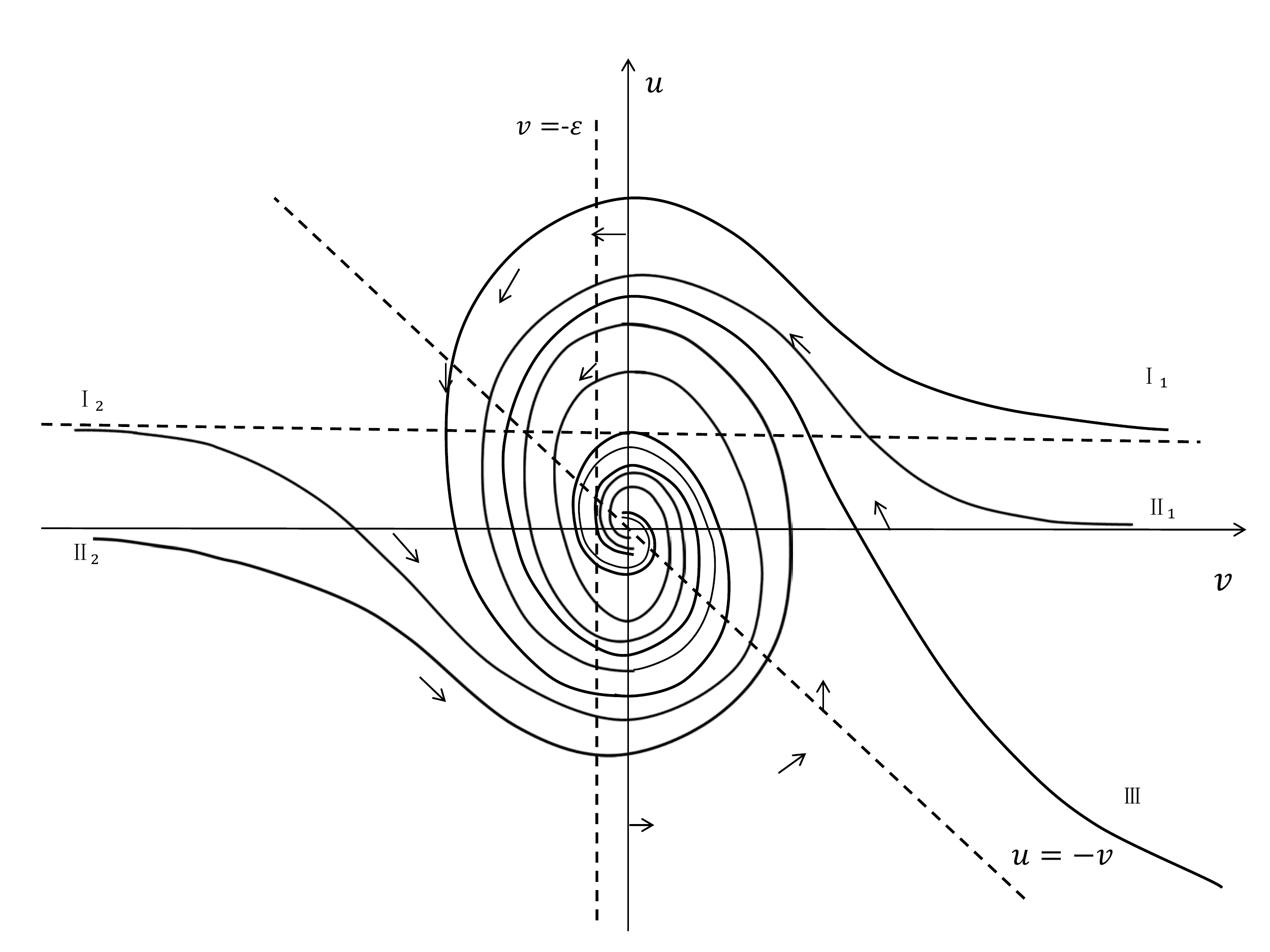}\\
  \caption{Dynamics of $u-v$ in the proof of Theorem \ref{thm-1-e}}\label{Fig-1-e-u-v}
\end{figure}

\noindent{\bf Claim 2.}
Let $\varphi:(-\infty,0)\to (0,+\infty)$ be the map sending $u\in (-\infty,0)$ to $\varphi(u)\in (0,+\infty)$ with $(\varphi(u),0)$  the first intersection point of the trajectory of \eqref{equ-1-e-u-v} passing through $(u,0)$ with the positive axis of $u$. Then $\varphi$ is decreasing and $\varphi\leq 2$.

\noindent{\bf Proof of Claim 2.} Because two different trajectories of \eqref{equ-1-e-u-v} cannot intersect,  $\varphi$ is decreasing. Moreover, for any trajectory with $u(0)>2$ and $v(0)=0$, we want to show that $u(x)>1$ for any $x<0$. Suppose this is not true. Let $x_0<0$ be the first intersection time of the trajectory with the line $u=1$. Then $u(x)>1$ for any $x\in (x_0,0]$. So, by \eqref{equ-1-e-u-v},
\begin{equation}
u(x_0)=u(0)-\int_0^{v(x_0)}\frac{v}{(u+v)^3}dv>2-\int_0^{\infty}\frac{v}{(1+v)^3}dv>1
\end{equation}
which contradicts that $u(x_0)=1$. This completes the proof of Claim 2.

\noindent{\bf Claim 3.} Let $u_m=\sup_{u<0}\varphi(u)$. Then, for any $u_0\geq u_m$, the trajectory of \eqref{equ-1-e-u-v} with $u(0)=u_0$ and $v(0)=0$ will not pass through the line $u+v=0$ for $x<0$.

\noindent{\bf Proof of Claim 3.} We only need to show that any trajectory of \eqref{equ-1-e-u-v} with $(u(0),v(0))$ in the region $\tilde R=\{(u,v)\ |\ v>0, u+v<0\}$ will intersects the negative axis of $u$ for some $x<0$. If this is not true, then $v(x)>v_0>0$ for any $x<0$. Then, it must have
\begin{equation}
\liminf_{u\to -\infty}\left|\frac{dv}{du}\right|\to 0.
\end{equation}
However, by \eqref{equ-1-e-u-v},
\begin{equation}
\frac{dv}{du}=-\frac{(v+u)^3}{v}\to +\infty
\end{equation}
as $u\to -\infty$ which is a contradiction.

\noindent{\bf Claim 4.} Let $\psi:[u_m,+\infty)\to [-\infty,+\infty)$ be such that for any $u_0\geq u_m$, $\psi(u_0)=\inf_{x<0} u(x)$ where $(u(x),v(x))$ is the trajectory with $u(0)=u_0$ and $v(0)=0$. Then, $\psi$ is increasing and surjective.

\noindent{\bf Proof of Claim 4.} By Claim 3, the trajectory $(u(x),v(x))$ will stay in the region $\{(u,v)\ |\ v>0\ \mbox{and}\ u+v>0\}$. So $u$ is decreasing with respect to $v$ along the trajectory by \eqref{equ-1-e-u-v} and $v\to \infty$ as $x$ decreasing. Therefore, $\psi(u_0)=\lim_{v\to\infty}u(v)$ along the trajectory. Then, by that two different trajectories will not intersect, we know that $\psi$ is increasing.

Moreover, to show that $\psi$ is surjective, we only need to show that $\psi(u_m)=-\infty$. Written the curve $Y$ in the form $(x(y),y)$, then similarly as in  Theorem \ref{thm-1-a}, we have
\begin{equation}\label{equ-1-e-x-yy}
x_{yy}=(yx_y+1)^3.
\end{equation}
The equation has a solution $x(y)$ for any given initial data $x(y_0)=0$ and $x_y(y_0)=0$. By \eqref{equ-1-e-x-yy}, $x_{yy}(y_0)=1$. So, $x_y(y)>0$ for $y_0+\epsilon>y>y_0$ with $\e$ small enough. Thus $(y,\frac{dy}{dx})$ of the solution $x(y)$ with $y>y_0$ is part of a trajectory of \eqref{equ-1-e-u-v}, such that $y$ tends to $y_0$ as $x\to 0^+$ and $y_x(x)\to +\infty$ as $x\to 0^+$. By Claim 1, we know that this trajectory will pass through the positive axis of $u$. Therefore $y_0$ is in the image of $\psi$. Because $y_0$ is arbitrary, $\psi(u_m)=-\infty$. This completes the proof of the claim.

\noindent{\bf Claim 5.} Let $(u(x),v(x))$ be a trajectory of \eqref{equ-1-e-u-v} with $u(0)=u_0\geq u_m$ and $v(0)=0$ such that $\psi(u_0)=-\infty$. Then,  $$u(x)\leq -v(x)+2v(x)^\frac{1}3$$ for any $v(x)>1$. This implies that $x\to -\infty$ along the trajectory.

\noindent{\bf Proof of Claim 5.} Suppose that the claim is not true. Then, there is  a $v_0>1$  such that
\begin{equation}
u(v_0)>-v_0+2v_0^\frac13.
\end{equation}
Let $a=-v_0+v_0^\frac13$. Since $u(v)\to-\infty$ as $v\to+\infty$, let $\xi_0>v_0$ be the first time with $u(\xi_0)=a$. Then, for any $v\in (v_0,\xi_0)$, $u(v)> a$. Hence, by \eqref{equ-1-e-u-v},
\begin{equation}
\begin{split}
-v_0+v_0^\frac13=&u(\xi_0)\\
=&u(v_0)-\int_{v_0}^{\xi_0}\frac{v}{(u(v)+v)^3}dv\\
\geq&-v_0+2v_0^\frac13-\int_{v_0}^{\infty}\frac{v}{(a+v)^3}dv\\
=&-v_0-\frac{1}{2}v_0^{-\frac13}+\frac{3}{2}v_0^\frac13.
\end{split}
\end{equation}
which is impossible for $v_0>1$. So, we obtain a contradiction.

Moreover, by \eqref{equ-1-e-u-v},
\begin{equation}
\begin{split}
x=&x_1-\int_{1}^{v(x)}\frac{1}{(u+v)^3}dv\\
\leq& x_1-\frac{1}{8}\int_{1}^{v(x)}\frac{1}{v}dv\\
=&x_1-\frac{1}{8}\ln v(x)
\end{split}
\end{equation}
where $v(x_1)=1$. Since $v\to+\infty$ as $x$ decreasing, $x\to -\infty$ along the trajectory. This completes the proof of the claim.

Note that a trajectory of \eqref{equ-1-e-u-v} intersects the line $v=0$ at extremal points of $y=y(x)$ and intersects the line $u+v=0$ at inflection points of $y=y(x)$, and two trajectories of \eqref{equ-1-e-u-v} with the same limits of $u$ as $v$ tending to $+\infty$ and $-\infty$ respectively will match up to form a complete curve in (2). Moreover, those trajectories of \eqref{equ-1-e-u-v} with $u\to-\infty$ as $v\to+\infty$ correspond to curves in (3) by Claim 5 (see Figure \ref{Fig-1-e-u-v}). So, we have shown (2) and (3) except the last statements which are direct corollaries of Claim 6 and Claim 7 below.

\noindent{\bf Claim 6.} For any $n\geq 1$, $$y_{n}-{x_n}<\frac{A}{\sqrt{|f'(y_n)|}}<x_{n+1}-y_n,$$
where $$A=2^{-\frac34}\int_0^1(1-x^2)^{-\frac34}dx.$$

\noindent{\bf Proof of Claim 6.} Let $w=v_x$. Then, by \eqref{equ-1-e-u-v}, we have the following planar dynamical system:
\begin{equation}\label{equ-1-e-w-v}
\left\{\begin{array}{l}v_x=w\\
w_x=-3w^\frac{2}{3}(v+w).
\end{array}\right.
\end{equation}
By \eqref{equ-1-e-w-v}, along a trajectory,
\begin{equation}\label{equ-1-e-v-mono}
\left(2v^2+w^\frac{4}3\right)_x=-4w^2\leq 0.
\end{equation}
Hence, when $x>y_n$,
\begin{equation}
2v^2+w^\frac43\leq 2v(y_n)^2=2f'(y_n)^2.
\end{equation}
Moreover, when $n$ is odd, $v(y_n)=f'(y_n)<0$ and
\begin{equation}
0<\frac{dv}{dx}\leq \left(2f'(y_n)^2-2v^2\right)^\frac{3}4
\end{equation}
for $x\in (y_n,x_{n+1}]$. Hence
\begin{equation}
\frac{d}{dx}\left(\int_{f'(y_n)}^v\left(2f'(y_n)^2-2\xi^2\right)^{-\frac{3}4}d\xi-x\right)\leq 0
\end{equation}
and
\begin{equation}
x-y_n\geq \int_{f'(y_n)}^{v(x)}\left(2f'(y_n)^2-2\xi^2\right)^{-\frac{3}4}d\xi
\end{equation}
for any $x\in [y_n,x_{n+1}]$. Then,
\begin{equation}\label{equ-1-e-x-y-d}
x_{n+1}-y_{n}>\int_{f'(y_n)}^0\left(2f'(y_n)^2-2\xi^2\right)^{-\frac{3}4}d\xi=\frac{A}{\sqrt{|f'(y_n)|}}
\end{equation}
Similarly, when $n$ is even, then $v(y_n)=f'(y_n)>0$ and
\begin{equation}
0>\frac{dv}{dx}\geq -\left(2f'(y_n)^2-2v^2\right)^\frac{3}4
\end{equation}
for any $x\in (y_n,x_{n+1}]$. So,
\begin{equation}
\frac{d}{dx}\left(\int_v^{f'(y_n)}\left(2f'(y_n)^2-2\xi^2\right)^{-\frac{3}4}d\xi-x\right)\leq0.
\end{equation}
This also gives us \eqref{equ-1-e-x-y-d}.

On the other hand, by \eqref{equ-1-e-v-mono}, for $x\leq y_n$, we have
\begin{equation}
2v^2+w^\frac43\geq 2f'(y_n)^2.
\end{equation}
So, when $n$ is odd,
\begin{equation}
\frac{dv}{dx}\leq -(2f'(y_n)^2-2v^2)^\frac{3}{4}
\end{equation}
and hence
\begin{equation}
\frac{d}{dx}\left(x+\int_{f'(y_n)}^v(2f'(y_n)^2-2\xi^2)^{-\frac{3}{4}}d\xi\right)\leq 0
\end{equation}
for $x\in [x_n,y_n]$. This gives us
\begin{equation}
y_n-x_n\leq \frac{A}{\sqrt{|f'(y_n)|}}.
\end{equation}
The case that $n$ is even can be similarly shown as before. This completes the proof of the claim.

\noindent{\bf Claim 7.} For any $n\geq 1$,
$$x_{n+1}-z_n<\frac{B}{|f(x_{n+1})|}<z_{n+1}-x_{n+1} $$
where
$$B=\sqrt 2\int_0^1\frac{1}{\sqrt{1-x^4}}dx.$$
\noindent{\bf Proof of Claim 7.} The proof is similar with that of Claim 6 by using \eqref{equ-1-e-decreasing}.
\end{proof}
\begin{thm}[Skew steady]\label{thm-1-f} Solutions of the equation \eqref{equ-1-f} are the following curves:
\begin{enumerate}
\item The lines $y=C$;
\item The curves $x=\frac{1}{20}y^5+C_1y+C_2$.
\end{enumerate}
\end{thm}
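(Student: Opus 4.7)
The plan is to follow the same two-parameterization strategy already used in the proofs of Theorem \ref{thm-1-a}, Theorem \ref{thm-1-b}, and Theorem \ref{thm-1-d}. The special feature of \eqref{equ-1-f} is that the driving vector field $BY+C$ is simply $(y,0)^t$, which makes the resulting ODEs particularly simple and the quintic answer transparent.

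First, I would locally write $Y$ in the form $(x(y),y)$. Using the formulas
$$N_Y=(1+x_y^2)^{-\frac12}(-1,x_y)^t,\qquad k_Y=-(1+x_y^2)^{-\frac32}x_{yy}$$
recorded in the proof of Theorem \ref{thm-1-a}, the left-hand side of \eqref{equ-1-f} reduces to $-y(1+x_y^2)^{-1/2}$ while the right-hand side reduces to $-(1+x_y^2)^{-1/2}x_{yy}^{1/3}$. The common factor cancels and I obtain
$$x_{yy}=y^3.$$
Two elementary integrations yield $x=\tfrac{1}{20}y^5+C_1 y+C_2$, which is case (2) of the statement.

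Next, locally writing $Y$ in the form $(x,y(x))$ and using the corresponding formulas from the proof of Theorem \ref{thm-1-a}, the left-hand side of \eqref{equ-1-f} becomes $-yy_x(1+y_x^2)^{-1/2}$ and the right-hand side becomes $(1+y_x^2)^{-1/2}y_{xx}^{1/3}$. This gives the ODE
$$y_{xx}=-y^3y_x^3,$$
which obviously admits the constant solutions $y=C$, producing case (1). Since every smooth immersed planar curve can locally be realized as a graph in at least one of the two forms $(x(y),y)$ or $(x,y(x))$, these two families exhaust all solutions.

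There is no serious obstacle here. Unlike the Expansion-in-a-single-direction and Contraction-in-a-single-direction cases of Theorem \ref{thm-1-b} and Theorem \ref{thm-1-d}, the variable $x_y$ does not appear inside the cube on the right-hand side of the first ODE, so no first integral or phase-plane analysis is needed --- the equation $x_{yy}=y^3$ is polynomial and integrates directly. The only thing to verify carefully is the sign bookkeeping in applying the cube root (the real cube root being order-preserving, $(-w)^{1/3}=-w^{1/3}$) and the fact that the two local graph descriptions patch consistently wherever both are available, neither of which presents any difficulty.
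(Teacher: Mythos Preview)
Your proposal is correct and follows essentially the same approach as the paper: both parametrizations $(x(y),y)$ and $(x,y(x))$ are used, yielding $x_{yy}=y^3$ (hence the quintics) and $y_{xx}=-(yy_x)^3$ (hence the lines $y=C$), respectively. If anything, your write-up is slightly more explicit than the paper's about the sign bookkeeping and about why the two local descriptions together exhaust all solutions.
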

\begin{proof}
Assume that the curve $Y$ is of the form $(x(y),y)$. Similarly as in the proof of Theorem \ref{thm-1-a}, we have
\begin{equation}\label{equ-1-f-x_yy}
x_{yy}=y^3.
\end{equation}
So $x=\frac{1}{20}y^5+C_1y+C_2$ where $C_1$ and $C_2$ are any constants.

On the other hand, when locally written $Y$ in  the form $(x,y(x))$, similarly as in the proof of Theorem \ref{thm-1-a}, we have
\begin{equation}
y_{xx}=-(yy_x)^3.
\end{equation}
It is clear that $y=C$ is a solution of the equation.

This completes the proof of the Theorem.
\end{proof}
\begin{thm}[Skew steady with translation]\label{thm-1-g} Solutions of the equation \eqref{equ-1-g} are the following curves:
\begin{enumerate}
\item The parabolas: $x=\frac{1}{2}y^2-y+C$.
\item The curves given in the following parametrization form:
\begin{equation}\label{equ-1-g-x}
x(w)=\left\{\begin{array}{ll}\int_w^{\infty}\frac{ y_+(\xi)}{\xi^3-1}d\xi+{(C_1-1)}y_+(w)+\frac{2}{\sqrt 3}\arctan\left(\frac{2w+1}{\sqrt 3}\right)-\frac{\pi}{\sqrt 3}+C_2&1\leq w\leq \infty\\\int^w_{-\infty}\frac{ y_-(\xi)}{1-\xi^3}d\xi+{(C_1-1)}y_-(w)+\frac{2}{\sqrt 3}\arctan\left(\frac{2w+1}{\sqrt 3}\right)
+\frac{\pi}{\sqrt 3}+C_2&w<1\end{array}\right.\\
\end{equation}
and
\begin{equation}\label{equ-1-g-y}
y(w)=\left\{\begin{array}{ll}y_+(w)+C_1&1<w\leq\infty\\
y_-(w)+C_1&w<1\end{array}\right.
\end{equation}
where $y_\pm(w)$ are defined in \eqref{equ-1-g-y_+} and \eqref{equ-1-g-y_-} respectively.
\end{enumerate}
\end{thm}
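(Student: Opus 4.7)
My plan follows the same template used for Theorems \ref{thm-1-a}--\ref{thm-1-f}: convert the geometric equation into an ODE in each of the two local parametrizations, separate the easy algebraic solutions (parabolas), and then analyze the remaining case via an autonomous reduction. Using the parametrization $Y=(x(y),y)$, the formulas $N_Y=(1+x_y^2)^{-1/2}(-1,x_y)^t$ and $k_Y=-(1+x_y^2)^{-3/2}x_{yy}$ together with $BY+C=(y,1)^t$ reduce the equation to
\begin{equation*}
x_{yy}=(y-x_y)^3.
\end{equation*}
The parallel parametrization $Y=(x,y(x))$ gives $y_{xx}=(1-yy_x)^3$; a quick check shows that constants $y\equiv C$ are not solutions (they force $0=1$), and the parabolas below are easily verified to also solve this form locally, so nothing is missed by working in $x(y)$ away from horizontal tangents.

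The key reduction is the substitution $w=y-x_y$, which turns the second-order ODE into the autonomous first-order separable equation
\begin{equation*}
\frac{dw}{dy}=1-x_{yy}=1-w^3.
\end{equation*}
The fixed point $w\equiv 1$ gives $x_y=y-1$, hence the parabolas $x=\tfrac{1}{2}y^2-y+C$ of case (1). For $w\not\equiv 1$, I would separate $dy=\frac{dw}{1-w^3}$ and distinguish the two connected regimes $w>1$ and $w<1$. Because $1-w^3=-(w-1)(w^2+w+1)$, the equilibrium $w=1$ is approached logarithmically, so both integrals
\begin{equation*}
y_+(w)=\int_w^{\infty}\frac{d\xi}{\xi^3-1},\qquad y_-(w)=\int_{-\infty}^{w}\frac{d\xi}{1-\xi^3}
\end{equation*}
are well-defined and blow up as $w\to 1^{\pm}$; these give $y=y_\pm(w)+C_1$, producing the two branches in \eqref{equ-1-g-y}.

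To recover $x(w)$, I would use $x_y=y-w$ and $\frac{dy}{dw}=\frac{1}{1-w^3}$, so that
\begin{equation*}
\frac{dx}{dw}=\frac{y-w}{1-w^3}=\frac{C_1-w}{1-w^3}+\frac{y_\pm(w)}{1-w^3}.
\end{equation*}
Integrating the first term is the heart of the computation. Using the partial-fraction identity $\frac{\xi}{\xi^3-1}=\frac{1}{\xi^3-1}+\frac{1}{\xi^2+\xi+1}$ (which follows from $\xi^3-1=(\xi-1)(\xi^2+\xi+1)$) and the antiderivative
\begin{equation*}
\int\frac{d\xi}{\xi^2+\xi+1}=\frac{2}{\sqrt{3}}\arctan\!\left(\frac{2\xi+1}{\sqrt{3}}\right),
\end{equation*}
the integral $\int_w^{\infty}\frac{\xi-C_1}{\xi^3-1}\,d\xi$ collapses to $(1-C_1)y_+(w)+\frac{\pi}{\sqrt 3}-\frac{2}{\sqrt 3}\arctan\!\left(\frac{2w+1}{\sqrt 3}\right)$, and a parallel computation handles the $w<1$ branch with the endpoint at $-\infty$ producing the $+\frac{\pi}{\sqrt{3}}$ shift. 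Collecting everything, setting $C_2=x(\infty)$ (respectively $x(-\infty)$), and leaving the $y_\pm$-term intact as $\int_w^{\infty}\frac{y_+(\xi)}{\xi^3-1}\,d\xi$ reproduces \eqref{equ-1-g-x} exactly.

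The main obstacle is not conceptual but organizational: keeping track of the sign of $1-w^3$ on each branch, the correct endpoint ($\infty$ vs.\ $-\infty$) against which to normalize so that the improper integrals converge, and the precise constants arising from $\arctan(\pm\infty)=\pm\pi/2$. Once the ODE is linearized by $w=y-x_y$ and the identity $\frac{\xi}{\xi^3-1}=\frac{1}{\xi^3-1}+\frac{1}{\xi^2+\xi+1}$ is spotted, the rest is careful but routine integration, and no further dynamical-systems analysis (as in Theorems \ref{thm-1-c} and \ref{thm-1-e}) is required because the reduced equation is one-dimensional and explicitly integrable.
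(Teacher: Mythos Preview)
Your proposal is correct and follows essentially the same route as the paper: both write $Y=(x(y),y)$, derive $x_{yy}=(y-x_y)^3$, substitute $w=y-x_y$ to obtain the autonomous separable equation $w_y=1-w^3$, read off the parabola from $w\equiv 1$, and then integrate the two branches $w>1$, $w<1$ against the endpoints $\pm\infty$ to obtain \eqref{equ-1-g-x}--\eqref{equ-1-g-y}. Your partial-fraction identity $\tfrac{\xi}{\xi^3-1}=\tfrac{1}{\xi^3-1}+\tfrac{1}{\xi^2+\xi+1}$ is exactly the algebraic step the paper uses (written there as $\tfrac{C_1-w}{1-w^3}=\tfrac{C_1-1}{1-w^3}+\tfrac{1}{1+w+w^2}$); the only point you leave slightly implicit is the paper's final observation that $y_+(\infty)=y_-(-\infty)=0$, so the two branches glue at $w=\pm\infty$ into a single complete curve---you should make this explicit when you write it up.
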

\begin{proof}
Assume that the curve $Y$ is of the form $(x(y),y)$. Similarly as in the proof of Theorem \ref{thm-1-a}, we have
\begin{equation}\label{equ-1-g-x_yy}
x_{yy}=(y-x_y)^3.
\end{equation}
Let $w=y-x_y$. Then, by \eqref{equ-1-g-x_yy},
\begin{equation}\label{equ-1-g-w_y}
\frac{dw}{dy}=1-w^3.
\end{equation}
First of all, $w=1$ is a solution of \eqref{equ-1-g-w_y}. Then, by the definition of $w$, we obtain (1).

Moreover, when $w>1$, $y$ is decreasing with respect to $w$ and
\begin{equation}
\begin{split}
y(w)=&\int_w^\infty\frac{1}{\xi^3-1}d\xi+C_1=y_+(w)+C_1
\end{split}
\end{equation}
where
\begin{equation}\label{equ-1-g-y_+}
\begin{split}
y_+= -\frac{1}{3}\ln(w-1) +
 \frac16 \ln(1 + w + w^2)+\frac{1}{\sqrt 3}\arctan\left(\frac{2w+1}{\sqrt 3}\right)-\frac{\pi}{2\sqrt 3}.
 \end{split}
\end{equation}
Similarly, when $w<1$, $y$ is increasing with respect to $w$ and
\begin{equation}
\begin{split}
y(w)=&\int_{-\infty}^w\frac{1}{1-\xi^3}d\xi+C_1=y_-(w)+C_1
\end{split}
\end{equation}
where
\begin{equation}\label{equ-1-g-y_-}
\begin{split}
y_-=&- \frac{1}{3}\ln(1-w) +
 \frac16 \ln(1 + w + w^2)+\frac{1}{\sqrt 3}\arctan\left(\frac{2w+1}{\sqrt 3}\right)+\frac{\pi}{2\sqrt 3}.
 \end{split}
\end{equation}
Moreover, by the definition of $w$, we have $x_y=y-w$. So, by \eqref{equ-1-g-w_y},
\begin{equation}\label{equ-1-g-x_w}
\frac{dx}{dw}=\frac{dx}{dy}\frac{dy}{dw}=\frac{ y-w}{1-w^3}={\frac{y_{\pm}(w)}{1-w^3}+\frac{C_1-1}{1-w^3}+\frac{1}{1+w+w^2}}.
\end{equation}
So,
\begin{equation}
x=\left\{\begin{array}{ll}\int_w^{\infty}\frac{ y_+(\xi)}{\xi^3-1}d\xi+{(C_1-1)}y_++\frac{2}{\sqrt 3}\arctan\left(\frac{2w+1}{\sqrt 3}\right)-\frac{\pi}{\sqrt 3}+C_2&w>1\\\int^w_{-\infty}\frac{ y_-(\xi)}{1-\xi^3}d\xi+{(C_1-1)}y_-+\frac{2}{\sqrt 3}\arctan\left(\frac{2w+1}{\sqrt 3}\right)+\frac{\pi}{\sqrt 3}+C_2&w<1.
\end{array}\right.
\end{equation}
Note that $y_+(\infty)=y_-(-\infty)=0$. So, the two parts of $x$ and $y$ with $w>1$ and $w<1$ will match up to form a complete curve at $w=\pm\infty$. From this, we obtain (2). This completes the proof of the Theorem.
\end{proof}

\end{document}